\newtheorem{thm}{Theorem}[section]
\newtheorem{cor}[thm]{Corollary}
\newtheorem{lem}[thm]{Lemma}
\newtheorem{prop}[thm]{Proposition}
\newtheorem{defn}[thm]{Definition}
\newtheorem{exam}[thm]{Example}
\newtheorem{rem}[thm]{Remark}
\numberwithin{equation}{section}
\newcommand{\ZZ}{\mathbb{Z}}
\def\cal{\mathcal }
\DeclareMathOperator{\codim}{codim}
\DeclareMathOperator{\Span}{Span}
\DeclareMathOperator{\diag}{diag}
\begin{document}

\title[On solvable Lie superalgebras of maximal rank]{On solvable Lie superalgebras of maximal rank}

\author{B.A. Omirov, I.S. Rakhimov, G.O. Solijanova}

\address{Bakhrom A.Omirov \newline \indent
Institute for Advances Study in Mathematics, Harbin Institute of Technologies, Harbin, 150001, China} \email{{\tt omirovb@mail.ru}}

\address{Isamiddin S.Rakhimov \newline \indent Universiti Teknologi MARA (UiTM), Shah Alam, Malaysia} \email{\tt isamiddin@uitm.edu.my}

\address{Gulkhayo O.Solijanova \newline \indent National University of Uzbekistan, Tashkent, Uzbekistan}
\email{{\tt gulhayo.solijonova@mail.ru}}

\begin{abstract} In this paper we establish some basic properties of superderivations of Lie superalgebras. Under certain conditions, for solvable Lie superalgebras with given nilradicals, we give estimates for upper bounds to dimensions of complementary subspaces to the nilradicals.  Moreover, under these conditions we describe the solvable Lie superalgebras of maximal rank. Namely, we prove that an arbitrary solvable Lie superalgebra of maximal rank is isomorphic to the maximal solvable extension of nilradical of maximal rank.

\end{abstract}

\subjclass[2010]{17B05, 17B30, 17B40, 17B55, 17B56.}

\keywords{nilpotent Lie superalgebra, solvable Lie superalgebra, superderivation, solvable extension of nilpotent Lie superalgebra, maximal rank, maximal torus.}

\maketitle


\section{Introduction}

Currently the theory of Lie superalgebras is one of the most actively studied branches of the modern algebra and theoretical physics. The basic results on Lie superalgebras theory can be found in \cite{Kac}, \cite{Kac2} and references therein. It is well known that Lie superalgebras are a generalization of Lie algebras and they are important in theoretical physics because of description of supersymmetry. Lie superalgebras are also important in two aspects:
\begin{itemize}
\item Graded Lie algebras have appeared in the mathematical literature in deformation theory. The first basic example of graded Lie algebras was provided by A. Nijenhuis \cite{Nijenhuis} and then by A. Fr\"{o}licher and
A. Nijenhuis in their paper \cite{FrolicherNijenhuis}. The basic role that this
object plays in the theory of deformation of algebraic structures was discovered by M. Gerstenhaber in a fundamental series of papers \cite{Gerstenhaber1,Gerstenhaber2}.

\item From a rather different point of view (motivated primarily by problems of second quantization), the subject
was introduced by F.A. Berezin and G.I. Kac \cite{BerezinKac}.

Lie algebras graded by the two-element additive group $\mathbb{Z}_2$ received a special name,
``Lie superalgebras'', due to the development of the so-called ``supermathematics'' related to some necessities of the quantum mechanics and nuclear physics (see \cite{Berezin1,Berezin2,Berezin3}).
\end{itemize}


Let now to give a brief review on the main results of the theory of finite-dimensional Lie algebras with comparison to Lie superalgebras (see \cite{Kac}).

A Lie superalgebra  $\mathcal{L}$ contains a unique maximal solvable ideal $\cal{R}$ (the solvable radical). The Lie superalgebra $\mathcal{L}/\cal{R}$ is semisimple (that is, has no solvable ideals).
Therefore, the study of finite-dimensional Lie superalgebras is reduced in a certain sense to the studies of semisimple and of solvable Lie superalgebras. However, an analogue of Levi theorem on $\mathcal{L}$ being a semidirect sum of $\cal{R}$ and $\mathcal{L}/\cal{R}$
is not true, in general, for Lie superalgebras.

The crucial result of the theory of finite-dimensional solvable Lie algebras is Lie's theorem, which asserts that every finite-dimensional irreducible representation of a solvable Lie algebra over $\mathbb{C}$ is one-dimensional. For Lie superalgebras this is not true, in general as well.

The well known theorem on semisimple Lie algebras to be a direct sum of simple
ones is by no means true for Lie superalgebras. However, there is a
construction that allows to describe finite-dimensional semisimple Lie
superalgebraa in terms of simple ones (see \cite{Kac} Section 5.1.3, Theorem 6). It is similar
to the construction in \cite{Block}.

The fundamental problem of classifying the finite-dimensional
simple Lie superalgebras over an algebraically closed field of characteristic 0 has been solved in \cite{Kac}. Thus, like in the case of Lie algebras the classification problem of Lie superalgebras is brought to that of the solvable Lie superalgebras.

The main goal of this paper is to study the complex solvable Lie superalgebras of the maximal rank with a given nilradical. As one can notice from the facts mentioned above, the structure theory of Lie superalgebras is more complicated than that of Lie  algebras. In particular, Lie theorem is not true for a solvable  Lie superalgebras. Even, the square of a solvable superalgebra is not necessarily nilpotent \cite{Repovs}, \cite{Wang2002}.

Recall that, a solvable Lie algebra $\cal R$ of the form $\cal N\dot{+}\cal T_{max}$ is called of maximal rank if  $\dim\cal T_{max}=dim (\cal N/\cal N^2)$, where $\cal T_{max}$ is a maximal torus of the nilradical $\cal N$ (see \cite{Meng}). Due to the description obtained in \cite{Khal} we conclude that a complex solvable Lie algebra of maximal rank is equivalent to a solvable Lie algebra that satisfies the condition $\codim \cal N=\dim (\cal N/\cal N^2)$. The uniqueness (up to isomorphism) of such algebra is due to the conjugacy of the maximal tori (see \cite{Mostow}).

In this paper we establish some basic properties of even superderivations of Lie superalgebras. The notion of characteristically nilpotent Lie algebra is extended to the case of Lie superalgebra. Under two conditions that guarantee the recovery of sufficiently enough information on a solvable Lie superalgebra from its nilradical, we get estimates for upper bounds to dimensions of complementary subspaces to nilradicals of solvable Lie superalgebras. We also present an example that shows that the considered conditions are essential for the obtained estimations. Besides,
under the two conditions (square of solvable Lie superalgebra belongs to its nilradical and characterization of applicability of analogue of Lie's theorem) we obtain the description (up to isomorphism) of complex solvable Lie superalgebras of maximal rank. In fact, an arbitrary solvable Lie superalgebra of maximal rank appears to be isomorphic to the maximal solvable extension of nilradical of maximal rank. Using results of \cite{Wang2003} we conclude that any even superderivation of solvable Lie superalgebra of maximal rank is inner. Finally, special types of maximal solvable extensions of nilpotent Lie superalgebras are described.

In this paper all spaces and superalgebras are considered to be finite-dimensional and over the field $\mathbb{C}$.

\section{Preliminaries}

A vector space $V$ is said to be $\ZZ_2${\it -graded} if it admits a decomposition in direct sum, $V=V_{\bar 0} \oplus V_{\bar 1}$, where ${\bar 0}, {\bar 1} \in \ZZ_2$. An element $x \in V$ is called {\it homogeneous of degree}  $|x|$ if it is an element of $V_{|x|}$, $|x|\in \ZZ_2$. In particular, the elements of $V_{\bar 0}$ (resp. $V_{\bar 1}$) are also called {\it even} (resp. {\it odd}).

\begin{defn} [\cite{Kac}] A  Lie superalgebra is a $\ZZ_2$-graded vector space  $\mathcal{L}=\mathcal{L}_{\bar 0} \oplus \mathcal{L}_{\bar 1}$, with an even bilinear commutation operation (or ``supercommutation'') $[\cdot,\cdot]$, which for arbitrary homogeneous elements $x, y, z$ satisfies the conditions

\begin{enumerate}
\item[1.] $[x,y]=-(-1)^{|x| |y|}[y,x],$

\item[2.]   $(-1)^{|x| |z|}[x,[y,z]]+(-1)^{|x| |y|}[y,[z,x]]+(-1)^{|y||z|}[z,[x,y]]=0$ {\it (super Jacobi identity).}

\end{enumerate}
\end{defn}

Thus, $\mathcal{L}_{\bar 0}$ is an ordinary Lie algebra and $\mathcal{L}_{\bar 1}$ is a module over $\mathcal{L}_{\bar 0}$.

Note that by applying the condition 1 the super Jacobi identity
can be transformed to the following one
$$[x,[y,z]]=[[x,y],z]-(-1)^{|y| |z|}[[x,z],y],$$
which is called super Leibniz identity \cite{Albeverio}.

Recall the definition of superderivations of Lie superalgebras \cite{Kac2}. A superderivation $d$ of degree $|d|$ ($|d|\in \ZZ_2$) of a Lie superalgebra $\mathcal{L}$, is an endomorphism $d \in End(\mathcal{L})_{|d|}$ with the property
\begin{align}\label{der}
d([x,y])=[d(x),y] + (-1)^{|d| \cdot |x|} [x, d(y)].
\end{align}

If we denote by $Der(\mathcal{L})_{s}$ the space of all superderivations of degree $s$, then $Der (\mathcal{L})=Der(\mathcal{L})_0\oplus Der(\mathcal{L})_1$ is the Lie superalgebra of superderivations of $\mathcal{L}$ with respect to the bracket $[d_i,d_j]=d_i\circ d_j - (-1)^{|d_i| |d_j|} d_j\circ d_i$, where $Der(\mathcal{L})_0$ consists of even superderivations and $Der(\mathcal{L})_1$ of odd ones. Clearly, $Der(\mathcal{L})_0$ forms a Lie algebra.

Note that for a fixed element $x\in \mathcal{L}$ the operator $ad_x : \mathcal{L} \to \mathcal{L}$ defined by $ad_x(y)=[y,x]$ is a superderivation of the Lie superalgebra $\mathcal{L}$, which is called {\it inner superderivation}. The set of all inner superderivations denoted by $Inder(\mathcal{L})$.

The  {\it descending central sequence} and {\it derived sequence} for a Lie superalgebra $\mathcal{L}=\mathcal{L}_0 \oplus \mathcal{L}_{1}$ are defined similar to the case of Lie algebras:
 $$\mathcal{L}^1=\mathcal{L},\ \mathcal{L}^{k+1}=[\mathcal{L}^k,\mathcal{L}],\ \mbox{and}\ \mathcal{L}^{[1]}=\mathcal{L},\ \mathcal{L}^{[k+1]}=[\mathcal{L}^{[k]}, \mathcal{L}^{[k]}],\quad  k \geq 1, \ \mbox{respectively}.$$

 \begin{defn}
 A Lie superalgebra $\mathcal{L}$ is called nilpotent (respectively, solvable) if there exists  $s\in\mathbb{N}$ (respectively, $k\in\mathbb{N}$) such that $\mathcal{L}^s=0$ (respectively, $\mathcal{L}^{[k]}=0$.)
  \end{defn}

With the descending central sequence for a nilpotent Lie superalgebra $\mathcal{N}$ we associate the natural filtration
\begin{align}\label{filtr}
\mathcal{L}^1\supset \mathcal{L}^{2} \supset \dots \supset  \mathcal{L}^{k} \supset \mathcal{L}^{k+1} \supset \cdots.
\end{align}

Let us define now the following crucial sequences for an arbitrary Lie superalgebra $\mathcal{L}=\mathcal{L}_0\oplus \mathcal{L}_1$:

  $${\cal C}^1(\mathcal{L}_0)=\mathcal{L}_0,\ {\cal C}^{k+1}(\mathcal{L}_0)=[{\cal C}^{k}(\mathcal{L}_0),{\mathcal{L}_0}],\quad
{\cal C}^1(\mathcal{L}_1)=\mathcal{L}_1,\ {\cal C}^{k+1}(\mathcal{L}_1)=[{\cal C}^{k}(\mathcal{L}_1),{\mathcal{L}_0}], \quad  k\geq1.$$

  \begin{thm}\cite{Gilg}\label{Gilg}
The Lie superalgebra $\mathcal{L}=\mathcal{L}_0\oplus \mathcal{L}_1$ is nilpotent if and only if there exist positive integers $p$ and $q$ such that ${\cal C}^p(\mathcal{L}_0)={\cal C}^q(\mathcal{L}_1)=\{0\}.$
  \end{thm}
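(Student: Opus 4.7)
My plan is to handle the two directions separately: the forward implication by a short induction, and the reverse one by a super-Jacobi ``sorting'' argument.

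For $(\Rightarrow)$, suppose $\mathcal{L}^s=\{0\}$ for some $s$. I claim $\mathcal{C}^k(\mathcal{L}_i)\subseteq \mathcal{L}^k$ for $i\in\{0,1\}$ and every $k\geq 1$, which I would prove by induction on $k$. The base case is $\mathcal{L}_i\subseteq\mathcal{L}=\mathcal{L}^1$; the inductive step uses the definitions directly:
\[
\mathcal{C}^{k+1}(\mathcal{L}_i)=[\mathcal{C}^{k}(\mathcal{L}_i),\mathcal{L}_0]\subseteq[\mathcal{L}^k,\mathcal{L}]=\mathcal{L}^{k+1}.
\]
Taking $k=s$ yields $\mathcal{C}^s(\mathcal{L}_0)=\mathcal{C}^s(\mathcal{L}_1)=\{0\}$, so one may take $p=q=s$.

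For $(\Leftarrow)$, assume $\mathcal{C}^p(\mathcal{L}_0)=\mathcal{C}^q(\mathcal{L}_1)=\{0\}$; the goal is to produce $N$ with $\mathcal{L}^N=\{0\}$. These hypotheses say that $\mathcal{L}_0$ is a nilpotent Lie algebra and that $\mathcal{L}_1$ is a nilpotent $\mathcal{L}_0$-module under $\mathrm{ad}$: any iterated action of $\mathcal{L}_0$ on $\mathcal{L}_1$ of length $\geq q-1$ vanishes. Applying the super Leibniz identity repeatedly, $\mathcal{L}^N$ is spanned by left-normed homogeneous brackets
\[
[[\ldots[a_1,a_2],a_3],\ldots,a_N]=\mathrm{ad}_{a_N}\cdots\mathrm{ad}_{a_3}([a_1,a_2]).
\]
The key identity is $[\mathrm{ad}_x,\mathrm{ad}_y]=\mathrm{ad}_{[y,x]}$, itself a consequence of super Jacobi, which permits me to permute adjacent $\mathrm{ad}$-operators modulo an ``error'' term $\mathrm{ad}_{[y,x]}$ involving a new bracket letter. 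Sorting so that the $\mathrm{ad}_y$ with $y\in\mathcal{L}_0$ are grouped together, a block of at least $p$ such even $\mathrm{ad}$'s acting on an even seed lands in $\mathcal{C}^p(\mathcal{L}_0)=\{0\}$, and a block of length $\geq q-1$ acting on an odd seed lands in $\mathcal{C}^q(\mathcal{L}_1)=\{0\}$. A pigeonhole on the number of even versus odd entries among $a_1,\ldots,a_N$ then forces one of these two situations to occur once $N$ is sufficiently large in terms of $p$ and $q$.

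The main obstacle will be controlling the error terms produced during sorting. Every commutator swap introduces an $\mathrm{ad}_{[y,x]}$ whose bracket letter may itself be composite; in particular, swapping two odd letters produces an element of $[\mathcal{L}_1,\mathcal{L}_1]\subseteq\mathcal{L}_0$, a genuinely new even letter that may demand further sorting. I would organise the argument as a nested induction on a pair $(N,c)$, where $c$ is a complexity measure of the current multiset of letters (for instance, the sum of their formal lengths plus the count of odd ones). Each application of super Leibniz or of the commutator identity either strictly decreases $c$ or places the expression into the pigeonhole-vanishing regime, and a careful termination argument closes the proof. Making this bookkeeping explicit, and tracking the precise value of $N=N(p,q)$, is the technical heart of the argument.
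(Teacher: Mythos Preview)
The paper does not actually supply a proof of this theorem; it is quoted from \cite{Gilg} with no argument given, so there is nothing in the paper to compare your proposal against. That said, here is an assessment of your attempt on its own merits.

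Your forward direction is clean and correct.

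Your reverse direction is only a sketch, and you yourself flag the real difficulty: the ``sorting'' via $[\mathrm{ad}_x,\mathrm{ad}_y]=\mathrm{ad}_{[y,x]}$ generates new letters, and you have not exhibited a well-founded complexity measure that actually decreases at every step. In particular, when you swap two odd letters you produce one even letter of higher formal length, so neither ``total length'' nor ``number of odd letters'' alone is monotone; you would need a lexicographic combination and a careful case analysis to make the induction terminate. This is fixable, but as written it is a genuine gap rather than a routine detail.

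There is a much shorter route that avoids all of this bookkeeping and uses only tools already stated in the paper. From $\mathcal{C}^p(\mathcal{L}_0)=0$ and $\mathcal{C}^q(\mathcal{L}_1)=0$ one sees that $\mathrm{ad}_x$ is nilpotent on $\mathcal{L}$ for every $x\in\mathcal{L}_0$. For $y\in\mathcal{L}_1$, the super Leibniz identity gives $\mathrm{ad}_y^{\,2}=\tfrac{1}{2}\,\mathrm{ad}_{[y,y]}$ with $[y,y]\in\mathcal{L}_0$, so $\mathrm{ad}_y$ is nilpotent as well. Now Engel's theorem for Lie superalgebras (Theorem~\ref{thmEngel}) finishes the argument immediately. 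This replaces your entire sorting induction by a two-line computation.
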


Note that Engel's theorem remains to be valid for Lie superalgebras too (see \cite{scheunert}).

\begin{thm}[Engel's theorem] \label{thmEngel} A Lie superalgebra $\mathcal{L}$ is nilpotent if and only if $ad_x$ is nilpotent for every homogeneous element $x$ of $\mathcal{L}$.
\end{thm}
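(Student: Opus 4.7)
The plan is to reduce the statement to Gilg's criterion (Theorem~\ref{Gilg}), which characterises nilpotency of $\mathcal{L}$ via the simultaneous vanishing of the sequences ${\cal C}^\bullet(\mathcal{L}_0)$ and ${\cal C}^\bullet(\mathcal{L}_1)$. The forward direction is immediate from the definition of nilpotency: if $\mathcal{L}^s = 0$, then for every homogeneous $x$ one has $(ad_x)^{s-1}(\mathcal{L})\subseteq \mathcal{L}^s = 0$, so $ad_x$ is nilpotent. I would dispatch ``only if'' in a single line.

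For the nontrivial direction, I would assume $ad_x$ is nilpotent for every homogeneous $x\in\mathcal{L}$ and exploit the observation that both $\mathcal{L}_0$ and $\mathcal{L}_1$ are preserved by $ad_x$ whenever $x\in\mathcal{L}_0$; hence the restricted operators $ad_x|_{\mathcal{L}_0}$ and $ad_x|_{\mathcal{L}_1}$ are themselves nilpotent. I would then proceed in two parallel steps. First, applying the classical Engel theorem to the ordinary Lie algebra $\mathcal{L}_0$ acting on itself by adjoint yields that $\mathcal{L}_0$ is a nilpotent Lie algebra, so ${\cal C}^p(\mathcal{L}_0) = 0$ for some $p$. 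Second, regarding $\mathcal{L}_1$ as a finite-dimensional $\mathcal{L}_0$-module through the adjoint representation, the module form of Engel's theorem (a Lie algebra every element of which acts nilpotently on a finite-dimensional module stabilises a complete flag) produces a chain $0 = V_0 \subset V_1 \subset \cdots \subset V_m = \mathcal{L}_1$ of $\mathcal{L}_0$-submodules with $[\mathcal{L}_0, V_i] \subseteq V_{i-1}$, whence ${\cal C}^q(\mathcal{L}_1) = 0$ for some $q$. Combining the two vanishings and invoking Theorem~\ref{Gilg} completes the proof.

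I do not foresee a serious obstacle in executing this plan: the whole argument lives inside the realm of ordinary Lie algebras and their representations, with Gilg's criterion serving as the bridge back to the $\mathbb{Z}_2$-graded setting. The only delicate point is to remember that the \emph{module} form of Engel's theorem is needed for the second step, not merely the self-adjoint version. It is also worth noting that the hypothesis of nilpotency of $ad_x$ for odd $x$ is not used in this approach, so the sufficient condition can in fact be weakened to $x\in\mathcal{L}_0$ alone.
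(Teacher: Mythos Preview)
Your argument is correct. Note, however, that the paper does not supply its own proof of this theorem: the statement is quoted from Scheunert's monograph \cite{scheunert} as a known fact, so there is no in-paper proof to compare against. Your route via Theorem~\ref{Gilg} is a clean, self-contained derivation that stays entirely within the tools already assembled in the preliminaries, and it has the pleasant feature of making explicit (as you observe at the end) that only the nilpotency of $ad_x$ for \emph{even} $x$ is actually required---the odd hypothesis is never invoked. The one point to be careful about when writing it up is exactly the one you flag: for the action on $\mathcal{L}_1$ you need the representation-theoretic form of Engel's theorem (a finite-dimensional module on which every element of a Lie algebra acts nilpotently admits a full flag of submodules), not merely the statement that $\mathcal{L}_0$ itself is nilpotent.
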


Nevertheless, neither analogue of Lie's theorem nor its corollaries for solvable Lie superalgebras are true, in general. However, it is known that the analogue of Lie's theorem for a solvable Lie superalgebra  $\mathcal{R}=\mathcal{R}_0\oplus \mathcal{R}_1$ is true if and only if $[\mathcal{R}_1,\mathcal{R}_1] \subseteq [\mathcal{R}_0,\mathcal{R}_0]$ (see Proposition 5.2.4 in \cite{Kac}). Moreover, the classical result from the theory of Lie algebras on nilpotency of the square of a solvable Lie algebra is not true for solvable Lie superalgebras.

Further for a given nilpotent Lie superalgebra $\mathcal{L}=\mathcal{L}_0\oplus \mathcal{L}_1$, we shall use notations  $\{x_1, x_2, \dots, x_n\}$ and $\{y_1, y_2, \dots, y_m\}$ for the bases of even and odd parts of $\cal L$, respecitvely.

We give an example of such a solvable Lie superalgebra presented in \cite{Wang2002}.

\begin{exam}\label{exam} Let $\mathcal{R}=\mathcal{R}_0\oplus \mathcal{R}_1$ be a four dimensional solvable Lie superalgebra with the following table of multiplications in the basis $\{x_1, x_2, y_1, y_2\}$
$$\left\{\begin{array}{llll}
[x_1,x_2]=x_2,&[x_1,y_2]=y_2,&[y_1,y_1]=-2x_1,\\[1mm]
 [x_2,y_1]=y_2,&[y_1,y_2]=x_2,
\end{array}\right.$$

Clearly, $\mathcal{R}^2=\Span\{x_1, x_2, y_2\}$, while the nilradical of $\mathcal{R}$ is spanned by $\{x_2, y_2\}$. So, $\mathcal{R}^2$ does not belong to the nilradical of $\mathcal{R}$. Moreover, because of
$\Span\{x_1, x_2\}=[\mathcal{R}_1, \mathcal{R}_1]\nsubseteq [\mathcal{R}_0,\mathcal{R}_0]=\Span\{x_2\},$ we conclude that an analogue of Lie's theorem does not hold for the superalgebra $\mathcal{R}.$
\end{exam}
Further we shall use the following result on Lie algebras \cite{Mostow}.
\begin{thm} \label{thmMostow}  Any two maximal fully reducible subalgebras of a linear Lie algebra $L$ are conjugate under an inner automorphism from the radical of $[L,L]$.
\end{thm}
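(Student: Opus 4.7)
This is Mostow's classical conjugacy theorem, and the standard strategy is to reduce the problem via a Levi decomposition and then handle the semisimple and toral parts of a maximal fully reducible subalgebra separately. The plan is as follows.

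Let $R$ be the solvable radical of $L$ and write $L = S \dot{+} R$ for a fixed Levi complement. A fully reducible subalgebra $M \subseteq L$ acts completely reducibly on the ambient vector space, so it is reductive and splits canonically as $M = [M,M] + Z(M)$, with $[M,M]$ semisimple and $Z(M)$ an abelian subalgebra consisting of semisimple elements (a toral subalgebra). Maximality of $M$ forces $[M,M]$ to be maximal among semisimple subalgebras of $L$ and $Z(M)$ to be maximal among toral subalgebras of the centralizer $C_L([M,M])$.

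First I would apply Malcev's conjugacy theorem for Levi subalgebras to $[M_1,M_1]$ and $[M_2,M_2]$, obtaining an inner automorphism $\exp(\text{ad}\, n_1)$ with $n_1$ in the nilradical of $L$ which carries $[M_1,M_1]$ onto $[M_2,M_2] =: S'$. After this reduction both $Z(M_1)$ and $Z(M_2)$ lie in $C := C_L(S')$ and, by maximality, they are maximal toral subalgebras of the solvable algebra $C \cap R$. The conjugacy theorem for maximal toral subalgebras of a solvable Lie algebra---proved by induction on dimension using the weight-space (Fitting) decomposition of the nilpotent radical under the toral action, together with a Baker--Campbell--Hausdorff correction---then produces $n_2$ in the nilpotent radical of $C$ with $\exp(\text{ad}\, n_2)(Z(M_1)) = Z(M_2)$. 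The composition $\exp(\text{ad}\, n_2)\circ \exp(\text{ad}\, n_1)$ is the required inner automorphism.

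The main obstacle, and the reason the statement is sharper than a naive ``conjugate in $R$'' formulation, is verifying that both conjugating elements live in the radical of $[L,L]$ rather than merely in the nilradical. For this one uses that $L$ is linear: the quotient $L/[L,L]$ is abelian, and because any semisimple summand of $L$ acts semisimply and must act trivially on a one-dimensional quotient, every nilpotent ideal of $L$ is forced into $[L,L]$. Applying the same observation to $L$ and to $C$ places $n_1$ and $n_2$ inside the radical of $[L,L]$ and completes the argument. The remaining technical subtlety is to check that the torus conjugacy step really produces an element centralizing $S'$, which follows from the $S'$-equivariance of the Fitting decomposition of the nilpotent radical of $C$.
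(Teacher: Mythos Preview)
The paper does not supply a proof of this theorem at all: it is stated as a quotation of Mostow's classical result \cite{Mostow} and used as an input to later arguments (in particular, to derive the conjugacy of maximal tori of $Der(\mathcal{N})_0$). So there is no ``paper's own proof'' to compare against; the authors simply cite it.

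As for your sketch itself, the overall architecture --- split a maximal fully reducible $M$ as semisimple part $[M,M]$ plus central torus $Z(M)$, conjugate the semisimple parts by Malcev, then conjugate the tori inside the centralizer --- is the right skeleton of Mostow's argument. The one genuine weakness is your justification for why the conjugating element lands in the radical of $[L,L]$. Your claim that ``every nilpotent ideal of $L$ is forced into $[L,L]$'' is false as stated: a one-dimensional Lie algebra spanned by a nilpotent matrix has itself as nilpotent radical while $[L,L]=0$. Malcev's theorem alone only places the conjugating element in the nilpotent radical of $L$, which need not sit inside $[L,L]$. In Mostow's actual proof the sharper location of the conjugator comes from a more delicate argument (working with the action of the fully reducible subalgebras themselves and exploiting that their difference, suitably interpreted, already lies in $[L,L]\cap R$), not from a blanket inclusion of nilpotent ideals in the derived algebra. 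If you want a self-contained proof you will need to rework that step; otherwise the outline is sound.
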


\section{Some properties of superderivations}

In this section we establish some properties of superderivations.
First we analyze the next two examples.

\begin{exam} The straightforward computations lead that any even superderivation $d_0$ of the superalgebra $\mathcal{R}$ from Example \ref{exam} acts $\mathcal{R}$ as follows:
$$d_0(x_1)=\lambda x_2, \quad d_0(x_2)=\mu x_2, \quad d_0(y_1)=\lambda y_2, \quad d_0(y_2)=\mu y_2,$$
while any odd superderivation $d_1$ acts by
$$d_1(x_1)=\gamma y_2 \quad d_1(x_2)=\delta y_2, \quad d_1(y_1)=\eta x_1 - \gamma x_2, \quad
d_1(y_2)=(\delta - \eta)x_2.$$

It is clear that nilradical of $\mathcal{R}$ is $\Span\{x_2,y_2\}$ and $d_0(\mathcal{R}_0)\subseteq \mathcal{N}$, while $d_1(\mathcal{R}_1)\nsubseteq \mathcal{N}.$ So, in the case of $\mathcal{R}^2\nsubseteq N$ an analogue of the classical result of the Lie algebras theory that any derivation of finite-dimensional Lie algebra maps its radical to nilradical, is not true.
\end{exam}

The answer to the question whether even superderivation maps radical to nilradical is follows from the next example.

\begin{exam} \cite{Repovs}\label{exam2222} Let $\mathcal{R}$ be a solvable Lie superalgebra with the following multiplications table
$$\left\{\begin{array}{llll}
[x_1,x_3]=2x_3,&[x_2,y_2]=-2y_2,& [x_2,y_3]=2y_3,&[y_2,y_3]=-x_3,\\[1mm]
[x_1,y_3]=2y_3,&[x_2,y_1]=2y_1, &[x_3,y_1]=2y_3,&[y_1,y_2]=x_1,\\[1mm]

\end{array}\right.$$
where $\mathcal{R}_0=\Span\{x_1,x_2,x_3\}$ and $\mathcal{R}_1=\Span\{ y_1,y_2,y_3\}$. One can check that  $\mathcal{N}=\Span\{x_3,y_3\}$ is the nilradical of $\mathcal{R}$ and $\mathcal{R}^2\nsubseteq \mathcal{N}.$ Note that $ad_{x_2}\in Der(\mathcal{R})_0$ and $ad_{x_2}(y_2)=2y_2\notin \mathcal{N}.$ Therefore, the assertion: any even superderivation of a
Lie superalgebra maps radical to nilradical, is not true.
\end{exam}
Clearly, for an even superderivation $d$ of a Lie superalgebra $\mathcal{L}$ and any $r\in \mathbb{N}$ the following equality holds true:
\begin{equation}\label{eqder}
d^{r}([x, y])=\sum_{i=0}^{r}\begin{pmatrix} r\\i\end{pmatrix}[d^{i}(x),d^{r-i}(y)], \quad x,y\in \mathcal{L}
\end{equation}
where $\begin{pmatrix} r\\i\end{pmatrix}=\frac{r!}{i!(r-i)!}$ is a binomial coefficient.

In case of $\tilde{d}$ is odd superderivation we have  $[\tilde{d},\tilde{d}]=2\tilde{d}^2$, which imply that $\tilde{d}^2$ is even superderivation. Therefore, from Equality \eqref{eqder} we get
\begin{equation}\label{eqder1}
\tilde{d}^{2r}([x, y])=\sum_{i=0}^{r}\begin{pmatrix} r\\i\end{pmatrix}[\tilde{d}^{2i}(x),\tilde{d}^{2(r-i)}(y)], \quad x, y\in \mathcal{L}.\\[1mm]
\end{equation}

Applying Equality \eqref{eqder1}, by induction one can prove the following equality:
$$\tilde{d}^{2r+1}([x, y])=\sum_{i=0}^{r}\begin{pmatrix} r\\i\end{pmatrix}([\tilde{d}^{2(r-i)+1}(x),\tilde{d}^{2i}(y)]+
(-1)^{|x|}[\tilde{d}^{2(r-i)}(x),\tilde{d}^{2i+1}(y)]), \quad x,y\in \mathcal{L}.$$

Applying Equality \eqref{eqder} similarly as in the Lie algebras case (see Theorem 2.5.13 in \cite{Winter}) one can prove the following result.

\begin{prop} \label{prop43} Let $\mathcal{L}$ be a Lie superalgebra with radical $\mathcal{R}$. Then
$d(\mathcal{R})\subseteq \mathcal{R}$ for any even superderivation $d$ of $\mathcal{L}$.
\end{prop}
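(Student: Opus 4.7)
The plan is to transport the classical derivation--radical theorem for Lie algebras (Winter, Theorem 2.5.13) to the super setting: I will construct a $\ZZ_2$-graded solvable ideal $\mathcal{S}$ of $\mathcal{L}$ containing $d(\mathcal{R})$, and then invoke the maximality of the radical to conclude $\mathcal{S}\subseteq\mathcal{R}$. Because $d$ is even, the super-Leibniz rule carries no sign and the formula \eqref{eqder} is formally identical to the classical Leibniz--Newton binomial formula; together with the invertibility of binomial coefficients over $\CC$, this is what lets the Lie-algebra argument be transcribed essentially verbatim.

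Concretely, I would take
\[
\mathcal{S}:=\sum_{i\ge 0}d^{i}(\mathcal{R}),
\]
a finite sum by finite dimensionality of $\mathcal{L}$. Since $d$ preserves parity, $\mathcal{S}$ is $\ZZ_2$-graded; it is clearly $d$-stable. A short induction on $i$ based on
\[
[z,d^{i}(x)]=d\bigl([z,d^{i-1}(x)]\bigr)-[d(z),d^{i-1}(x)],\qquad z\in\mathcal{L},\ x\in\mathcal{R},
\]
combined with $d(\mathcal{S})\subseteq\mathcal{S}$, shows $[\mathcal{L},\mathcal{S}]\subseteq\mathcal{S}$, so $\mathcal{S}$ is a $\ZZ_2$-graded ideal of $\mathcal{L}$.

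The technical heart of the argument, which I expect to be the main obstacle, is the solvability of $\mathcal{S}$. Choosing $k$ minimal with $\mathcal{R}^{[k]}=\{0\}$ and setting $\widetilde{\mathcal{R}}_{j}:=\sum_{i\ge 0}d^{i}(\mathcal{R}^{[j]})$, one has $\widetilde{\mathcal{R}}_{1}=\mathcal{S}$ and $\widetilde{\mathcal{R}}_{k}=\{0\}$, so it suffices to verify the descent $[\widetilde{\mathcal{R}}_{j},\widetilde{\mathcal{R}}_{j}]\subseteq\widetilde{\mathcal{R}}_{j+1}$ for $1\le j\le k-1$; this iterates to $\mathcal{S}^{[k]}\subseteq\widetilde{\mathcal{R}}_{k}=\{0\}$. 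The non-trivial content reduces to the claim that $[d^{a}(u),d^{b}(v)]\in\widetilde{\mathcal{R}}_{j+1}$ for all $u,v\in\mathcal{R}^{[j]}$ and all $a,b\ge 0$. The key input is \eqref{eqder} applied to $[u,v]\in\mathcal{R}^{[j+1]}$, which exhibits an element of $d^{a+b}(\mathcal{R}^{[j+1]})\subseteq\widetilde{\mathcal{R}}_{j+1}$ as a $\CC$-linear combination, with invertible binomial coefficients, of the brackets $[d^{\ell}(u),d^{a+b-\ell}(v)]$; supplementing this with the auxiliary relations obtained by applying $d$ to already-controlled brackets of strictly smaller total order, an induction on $a+b$ yields membership of each individual $[d^{a}(u),d^{b}(v)]$ in $\widetilde{\mathcal{R}}_{j+1}$ and closes the induction on $j$. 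Once $\mathcal{S}$ is known to be a solvable $\ZZ_2$-graded ideal of $\mathcal{L}$, the maximality of the radical forces $\mathcal{S}\subseteq\mathcal{R}$, whence $d(\mathcal{R})\subseteq\mathcal{R}$.
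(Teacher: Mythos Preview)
Your overall strategy is precisely what the paper has in mind: it gives no detailed argument, only the sentence that one applies identity~\eqref{eqder} ``similarly as in the Lie algebras case (see Theorem~2.5.13 in \cite{Winter})'', so building the $d$-stable graded ideal $\mathcal{S}=\sum_{i\ge0}d^{i}(\mathcal{R})$ and invoking maximality of the radical is the intended route.

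There is, however, a genuine gap in your solvability step. Fix $u,v\in\mathcal{R}^{[j]}$, set $X_{a,b}:=[d^{a}(u),d^{b}(v)]$ modulo $\widetilde{\mathcal{R}}_{j+1}$, and assume inductively that all $X_{a',b'}$ with $a'+b'<r$ vanish. Applying $d$ to the level-$(r-1)$ brackets yields the $r$ relations $X_{i,r-i}+X_{i-1,\,r-i+1}\equiv 0$ for $1\le i\le r$, forcing $X_{i,r-i}\equiv(-1)^{i}X_{0,r}$. Substituting into the relation from~\eqref{eqder} at level $r$ gives
\[
\sum_{i=0}^{r}\binom{r}{i}X_{i,r-i}\ \equiv\ \Bigl(\sum_{i=0}^{r}(-1)^{i}\binom{r}{i}\Bigr)X_{0,r}\ =\ (1-1)^{r}\,X_{0,r}\ =\ 0,
\]
which is vacuous. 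Thus your $r+1$ relations are dependent and leave one free unknown at every level, so you cannot conclude that any individual $X_{a,b}$ lies in $\widetilde{\mathcal{R}}_{j+1}$; the induction already stalls at $r=1$. The inclusion $[\widetilde{\mathcal{R}}_{j},\widetilde{\mathcal{R}}_{j}]\subseteq\widetilde{\mathcal{R}}_{j+1}$ is true \emph{a posteriori} (the proposition forces $\mathcal{S}=\mathcal{R}$), but your argument does not establish it independently.

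A clean repair, available precisely because $d$ is even and the ground field is $\CC$, bypasses the combinatorics entirely: for each $t\in\CC$ the map $\exp(td)$ is a $\ZZ_{2}$-graded automorphism of $\mathcal{L}$ (the required multiplicativity follows by differentiating and using the sign-free super-Leibniz rule for even $d$), and any automorphism maps the unique maximal solvable ideal to itself. Hence $\exp(td)(\mathcal{R})=\mathcal{R}$ for all $t$, and differentiating at $t=0$ gives $d(\mathcal{R})\subseteq\mathcal{R}$.
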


Since an even superderivation of a Lie superalgebra $\mathcal{L}$ is just a derivation of $\mathcal{L}$, the proofs of the following results are similar to the case of Lie algebras \cite{Gantmacher}, \cite{Jac}.

\begin{thm} \label{thm31}
Let $d$ be an even superderivation of a Lie superalgebra $\mathcal{L}$. Then there exists a unique diagonalizable even superderivation $d_0$ and a unique nilpotent even superderivation $d_1$ such that $d=d_0+d_1$ and $d_0 \circ d_1=d_1 \circ d_0$.
\end{thm}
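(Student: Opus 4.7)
The plan is to adapt the classical Jordan–Chevalley argument for derivations of Lie algebras (Jacobson, Gantmacher) to the super setting, exploiting the fact that an even superderivation preserves the $\ZZ_2$-grading and satisfies exactly the ordinary Leibniz rule without sign corrections.

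First, I would apply purely linear-algebraic Jordan–Chevalley decomposition to $d \in \mathrm{End}(\mathcal{L})$ regarded as an endomorphism of the finite-dimensional complex vector space $\mathcal{L}$. This yields a unique splitting $d = d_0 + d_1$ with $d_0$ diagonalizable, $d_1$ nilpotent, $d_0 d_1 = d_1 d_0$, and crucially $d_0, d_1$ are polynomials in $d$ with zero constant term. Because $d$ is even, it preserves $\mathcal{L}_{\bar 0}$ and $\mathcal{L}_{\bar 1}$; hence every polynomial in $d$ does too, so $d_0$ and $d_1$ are automatically even endomorphisms.

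The substantive step is to show that $d_0$ (and hence $d_1 = d - d_0$) is a superderivation. For this I would establish, by induction on $n$ using the identity \eqref{eqder} for even superderivations, the shifted Leibniz formula
\begin{equation*}
(d-(\lambda+\mu))^{n}[x,y] \;=\; \sum_{i=0}^{n}\binom{n}{i}\,[(d-\lambda)^{i}(x),\,(d-\mu)^{n-i}(y)],
\end{equation*}
valid for all homogeneous $x,y \in \mathcal{L}$ and all $\lambda,\mu \in \CC$. Denoting by $\mathcal{L}^{\lambda}$ the generalized eigenspace of $d$ for the eigenvalue $\lambda$, this identity forces $[\mathcal{L}^{\lambda},\mathcal{L}^{\mu}] \subseteq \mathcal{L}^{\lambda+\mu}$. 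Since $d$ preserves the $\ZZ_2$-grading, each $\mathcal{L}^{\lambda}$ splits into even and odd parts, so this inclusion is compatible with the super structure. Now $d_0$ acts on $\mathcal{L}^{\lambda}$ as scalar multiplication by $\lambda$; hence for $x \in \mathcal{L}^{\lambda}$, $y \in \mathcal{L}^{\mu}$ we compute
\begin{equation*}
d_0([x,y]) = (\lambda+\mu)[x,y] = [\lambda x, y] + [x, \mu y] = [d_0(x),y] + [x,d_0(y)],
\end{equation*}
which is precisely the defining property \eqref{der} for an even superderivation (the sign $(-1)^{|d|\cdot|x|}$ equals $1$ here). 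Extending by linearity over the generalized eigenspace decomposition shows $d_0 \in \Der(\mathcal{L})_{\bar 0}$, and therefore $d_1 = d - d_0 \in \Der(\mathcal{L})_{\bar 0}$ as well.

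Finally, uniqueness follows from the uniqueness of the underlying linear-algebraic Jordan–Chevalley decomposition: if $d = d_0' + d_1'$ is any other splitting with $d_0'$ diagonalizable, $d_1'$ nilpotent, and $d_0' d_1' = d_1' d_0'$, then viewing this as a decomposition of endomorphisms forces $d_0' = d_0$ and $d_1' = d_1$. The only point that requires care is verifying the shifted Leibniz formula in the super setting; this I see as the main (though mild) obstacle, and it is handled by a direct induction from \eqref{eqder} after writing $d - \lambda \cdot \mathrm{id}$ in place of $d$ and keeping track of the binomial rearrangement exactly as in the Lie-algebra case.
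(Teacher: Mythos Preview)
Your argument is correct and is precisely the classical Jordan--Chevalley proof for derivations (as in Jacobson/Gantmacher) carried over verbatim, using that an even superderivation satisfies the ordinary Leibniz rule; this is exactly what the paper does, since it gives no explicit proof and simply remarks that the Lie-algebra argument applies unchanged.
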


\begin{lem}\label{lem1}
Let $\mathcal{L}$ be a Lie superalgebra with an even superderivation $d$ and let $\mathcal{L}=\mathcal{L}_{\alpha}\oplus \mathcal{L}_{\beta} \oplus \cdots \oplus \mathcal{L}_{\gamma}$ be its weight decomposition with respect to $d$. Then
$$[\mathcal{L}_{\alpha_1},\mathcal{L}_{\alpha_2}]\subset \left\{\begin{array}{lll}
\mathcal{L}_{\alpha_1+\alpha_2}, &\text{if}\ \alpha_1+\alpha_2\  \text{is an eigenvalue of }\ d,\\[1mm]
0, &  \text{if}\ \alpha_1+\alpha_2\  \text{is not an eigenvalue of }\ d.
\end{array}\right.$$
where $\alpha_1,\alpha_2 \in \{\alpha, \beta, \dots \gamma\}.$
\end{lem}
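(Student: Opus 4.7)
The plan is to follow the classical weight space argument for ordinary Lie algebras, which transfers essentially unchanged because $d$ is \emph{even} and hence satisfies the usual (signless) Leibniz rule. Interpret each $\mathcal{L}_{\alpha}$ as the generalized eigenspace $\{v \in \mathcal{L}:(d-\alpha\,\mathrm{id})^{n_{\alpha}}v = 0\}$ for some exponent $n_{\alpha}$ depending on $\alpha$. The target claim is then that $[x,y]$ lies in the generalized eigenspace for the scalar $\alpha_1+\alpha_2$, which either is one of $\{\alpha,\beta,\dots,\gamma\}$ or does not occur at all (forcing $[x,y]=0$).

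The key technical step I would carry out first is the shifted binomial identity
\begin{equation*}
(d-(\alpha_1+\alpha_2)\,\mathrm{id})^{r}[x,y]=\sum_{i=0}^{r}\binom{r}{i}\bigl[(d-\alpha_1\,\mathrm{id})^{i}(x),(d-\alpha_2\,\mathrm{id})^{r-i}(y)\bigr]
\end{equation*}
for all $x,y\in\mathcal{L}$ and $r\geq 0$. The base case $r=1$ comes from splitting $(\alpha_1+\alpha_2)[x,y]=[\alpha_1 x,y]+[x,\alpha_2 y]$ and adding it to the usual derivation relation $d([x,y])=[d(x),y]+[x,d(y)]$. The inductive step is the same binomial manipulation that yields Equality (\ref{eqder}), applied this time to the operators $d-\alpha_i\,\mathrm{id}$, which still commute with $d$ and so can be pushed through the bracket in the same manner.

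Having the identity in hand, I would take $x\in\mathcal{L}_{\alpha_1}$, $y\in\mathcal{L}_{\alpha_2}$ and choose $r\geq n_{\alpha_1}+n_{\alpha_2}-1$. For every index $i\in\{0,1,\dots,r\}$, either $i\geq n_{\alpha_1}$, in which case $(d-\alpha_1\,\mathrm{id})^{i}(x)=0$, or $r-i\geq n_{\alpha_2}$, in which case $(d-\alpha_2\,\mathrm{id})^{r-i}(y)=0$. Hence every term of the sum vanishes and $[x,y]$ is annihilated by $(d-(\alpha_1+\alpha_2)\,\mathrm{id})^{r}$, i.e.\ lies in the generalized $(\alpha_1+\alpha_2)$-eigenspace of $d$. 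If $\alpha_1+\alpha_2$ is an eigenvalue of $d$, this is exactly $\mathcal{L}_{\alpha_1+\alpha_2}$; otherwise there is no such summand in the weight decomposition, forcing $[x,y]=0$.

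The only point that requires care is the derivation of the shifted binomial identity: one must observe that $\alpha_i\,\mathrm{id}$ (a scalar on $\mathcal{L}$) does \emph{not} act as a derivation of the bracket, yet its effect can be absorbed on one factor at a time because $\alpha_1\,\mathrm{id}$ and $\alpha_2\,\mathrm{id}$ are constant on the respective weight spaces. Once this splitting is properly accounted for in the base case, the induction is mechanical and the rest of the proof is a routine exponent-counting argument.
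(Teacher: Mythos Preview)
Your proposal is correct and is precisely the classical generalized-eigenspace argument from Jacobson that the paper itself invokes: the paper does not actually write out a proof of this lemma but simply remarks that, since an even superderivation satisfies the ordinary Leibniz rule, the proof carries over verbatim from the Lie algebra case (citing \cite{Gantmacher}, \cite{Jac}). Your shifted binomial identity and exponent-counting are exactly that standard proof, so there is nothing to add.
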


Now we give the definition of torus of nilpotent Lie superalgebra, which plays an important role in the structure of solvable Lie superalgebras (see \cite{Wang2002}, \cite{Wang2003}).

\begin{defn} A torus on a Lie superalgebra $\mathcal{L}$ is a commutative subalgebra of $Der(\mathcal{L})$ consisting of semisimple endomorphisms. A torus is said to be maximal if it is not strictly contained  in any other torus. We denote by $\mathcal{T}_{max}$ a maximal torus of a Lie superalgebra $\mathcal{L}$.
\end{defn}

Note that in the case of the complex numbers field a semisimple endomorphism is diagonalizable. Moreover, since diagonalizability of odd superderivation leads to its nullity one can assume that in the case of  the complex numbers field a torus on a Lie superalgebra $\mathcal{L}$ is a commutative subalgebra of $Der(\mathcal{L})_0$ consisting of diagonalizable endomorphisms. In addition, choosing an appropriate basis of a Lie superalgebra we can bring the mutually commuting diagonalizable derivations into a diagonal form, simultaneously.

\begin{exam} \label{exampletoralsolvable} Let $\mathcal{N}$ be a nilpotent Lie superalgebra and $\mathcal{T}$ be its a torus.    Consider a non-nilpotent solvable Lie algebra $R_{\mathcal{T}}=\mathcal{N}\dot{+} \mathcal{T}$ with the products $[\mathcal{N}, \mathcal{T}]$ and $[\mathcal{T}, \mathcal{T}]$ to be defined as follows
$$[x, d]=d(x), \quad [d, d']=0 \quad \mbox{for} \quad x\in \mathcal{N} \quad \mbox{and} \quad d, d'\in \mathcal{T}.$$
\end{exam}

Recall that a nilpotent Lie superalgebra $\mathcal{N}$  satisfying the condition $\dim\mathcal{T}_{max}=\dim(\mathcal{N}/\mathcal{N}^2)$ (that is, the dimension of its maximal torus equals the number of generator basis elements of $\mathcal{N}$) is called {\it of maximal rank} (see \cite{Wang2003}).

Here are examples of nilpotent Lie superalgebras of maximal rank.

\begin{exam}\label{exam111}
\

\begin{itemize}
\item[1.] Consider three dimensional Lie superalgebra $\cal N^{1}$ with only non-zero product $[y_1,y_2]=x_1$. Then $\{y_1,y_2\}$ are generators of $\cal N^1$ and $\Span\{ \diag(1, 1, 0), \diag(1, 0, 1)\}$ forms its maximal a torus.
 \item[2.] Let $\cal N^2$ be a five dimensional Lie superalgebra with its table of multiplications:
 $$\left\{\begin{array}{ll}
 [y_1,x_1]=y_{2},\\[1mm]
 [y_2,x_1]=y_{3},\\[1mm]
 [y_{3},y_1]=x_2,\\[1mm]
 [y_{2},y_2]=-x_2.\end{array}\right.$$
 Then $\{x_1,y_1\}$ are the generators of $\cal N^2$ and $\mathcal{T}_{max}=\Span\{\diag(1, 2, 0, 1, 2), \diag(0, 2, 1, 1, 1)\}$.

 \item[3.] Consider an $(m+2)$-dimensional ($m$ is odd) Lie superalgebra $\cal N^3$  with the table of multiplications as follows
$$\left\{\begin{array}{lll}
[y_i,x_1]=y_{i+1},& 1\le i\le m-1,\\[1mm]
[y_{m+1-i},y_i]=(-1)^{i+1}x_2,& 1\le i\le \frac{m+1}{2}.
\end{array}\right.$$
Then $\{x_1,y_1\}$ is the set of generators of $\cal N^3$ and $$\mathcal{T}_{max}=\Span\{\diag(1, 0, 2, \dots, i-2, \dots, n-2,\frac{1}{2}(n-2)), \diag(0, 1, 1, \dots, 1,\frac{1}{2})\}.$$
\end{itemize}
\end{exam}

For completeness we present also an example of a nilpotent Lie superalgebra of not maximal rank.

\begin{exam}\label{exam222} The $(n+1)$-dimensional nilpotent Lie superalgebra $\cal N^4$ defined by the products:
$$[x_1,x_i]=x_{i+1},\ 2\le i\le n-1, \ [y_1,y_1]=x_n,$$
has only three generators $\{x_1,x_2,y_1\},$ while
$\mathcal{T}_{max}=\Span\{\diag(1, 0, 2, \dots, i-2, \dots, n-2,\frac{1}{2}(n-2)), \diag(0, 1, 1, \dots, 1,\frac{1}{2})\}$ is two dimensional.
\end{exam}

In view of the fact that a fully reducible complex Lie algebra is the direct sum of a semisimple Lie algebra and an abelian algebra of diagonalizable elements \cite{Jac1}, we conclude that an abelian algebra of diagonalizable elements is a fully reducible. Therefore, one can conclude that $\cal T_{max}$ is a maximal fully reducible subalgebra of the linear Lie algebra $Der(\mathcal{N})_0$. Now applying Theorem \ref{thmMostow} to $\cal T_{max}$ we get the following result.

\begin{thm}
Any two maximal tori of a complex nilpotent Lie superalgebra $\mathcal{N}$ are conjugate under  an inner automorphism from the radical of $[Der(\mathcal{N})_0, Der(\mathcal{N})_0]$
\end{thm}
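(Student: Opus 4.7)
The proof strategy is essentially dictated by the paragraph immediately preceding the statement: reduce the problem to an application of Mostow's theorem (Theorem \ref{thmMostow}) inside the linear Lie algebra $Der(\mathcal{N})_0$. I would proceed in three short steps.

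First, I would pin down the ambient linear Lie algebra. By the remark following the definition of a torus, over $\mathbb{C}$ any torus $\mathcal{T}$ on $\mathcal{N}$ consists of diagonalizable even superderivations and hence sits inside $Der(\mathcal{N})_0$; and since diagonalizable odd superderivations are forced to be zero, no generality is lost by restricting to the even part. Thus every maximal torus $\mathcal{T}_{max}$ is a commutative subalgebra of the linear Lie algebra $Der(\mathcal{N})_0$ consisting of simultaneously diagonalizable endomorphisms.

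Second, I would verify the full reducibility hypothesis needed for Mostow's theorem. Invoking the structural fact cited from \cite{Jac1} that a fully reducible complex Lie algebra decomposes as the direct sum of a semisimple ideal and an abelian algebra of diagonalizable elements, one concludes that any commutative algebra of diagonalizable elements is itself fully reducible. Hence $\mathcal{T}_{max}$ is a fully reducible subalgebra of $Der(\mathcal{N})_0$, and it is maximal with this property: any fully reducible subalgebra of $Der(\mathcal{N})_0$ strictly containing $\mathcal{T}_{max}$ would have to contain strictly larger commuting families of diagonalizable elements, contradicting maximality of $\mathcal{T}_{max}$ as a torus.

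Third, with $\mathcal{T}_{max}$ identified as a maximal fully reducible subalgebra of the linear Lie algebra $Der(\mathcal{N})_0$, Theorem \ref{thmMostow} applies verbatim, giving that any two such $\mathcal{T}_{max}$'s are conjugate by an inner automorphism coming from the radical of $[Der(\mathcal{N})_0, Der(\mathcal{N})_0]$, which is exactly the statement.

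The only step that requires genuine care is the equivalence between \emph{maximal torus} (in the sense of the paper's definition) and \emph{maximal fully reducible subalgebra} of $Der(\mathcal{N})_0$; everything else is bookkeeping. The forward direction uses that abelian diagonalizable subalgebras are fully reducible, and the reverse uses the Jacobson decomposition of a fully reducible complex Lie algebra to extract the abelian diagonalizable summand, which must coincide with $\mathcal{T}_{max}$ by the maximality of the latter. This is the only nontrivial obstacle; once it is clear, Mostow's theorem closes the argument.
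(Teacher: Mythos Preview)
Your proposal is correct and follows exactly the approach the paper itself uses: the paper's entire argument is the paragraph immediately preceding the theorem, which identifies $\mathcal{T}_{max}$ as a maximal fully reducible subalgebra of $Der(\mathcal{N})_0$ via the Jacobson structure result and then invokes Theorem~\ref{thmMostow}. Your write-up is in fact more careful than the paper's, since you flag the one nontrivial point (that maximality as a torus coincides with maximality as a fully reducible subalgebra) which the paper simply asserts.
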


\begin{cor} \label{corequaldim}
 The dimensions of any two maximal tori of a nilpotent Lie superalgebra
 are equal.
\end{cor}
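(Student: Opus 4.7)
The plan is to derive the corollary as an immediate consequence of the preceding theorem. That theorem provides, for any two maximal tori $\mathcal{T}_1$ and $\mathcal{T}_2$ of a nilpotent Lie superalgebra $\mathcal{N}$, an inner automorphism $\varphi$ of $Der(\mathcal{N})_0$ (coming from the radical of $[Der(\mathcal{N})_0, Der(\mathcal{N})_0]$) such that $\varphi(\mathcal{T}_1) = \mathcal{T}_2$. Since $\varphi$ is in particular a linear automorphism of the ambient vector space $Der(\mathcal{N})_0$, its restriction to $\mathcal{T}_1$ is a linear isomorphism onto $\mathcal{T}_2$, so $\dim \mathcal{T}_1 = \dim \mathcal{T}_2$.

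So concretely I would: (i) fix two maximal tori $\mathcal{T}_1,\mathcal{T}_2 \subseteq Der(\mathcal{N})_0$; (ii) invoke the preceding theorem to obtain an inner automorphism $\varphi$ carrying $\mathcal{T}_1$ onto $\mathcal{T}_2$; (iii) conclude equality of dimensions from the fact that $\varphi|_{\mathcal{T}_1}\colon \mathcal{T}_1 \to \mathcal{T}_2$ is a vector space isomorphism.

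There is essentially no obstacle here: the content is entirely in the conjugacy theorem, and the corollary is just the extraction of the numerical consequence. The only thing worth being careful about is making sure the reader sees that conjugation acts as a linear isomorphism on subspaces (not merely as a set bijection), but this is immediate since inner automorphisms of $Der(\mathcal{N})_0$ are elements of $GL(Der(\mathcal{N})_0)$. Thus the proof is a one-line appeal to the theorem.
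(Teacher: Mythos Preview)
Your proposal is correct and matches the paper's approach exactly: the corollary is stated immediately after the conjugacy theorem with no separate proof given, precisely because it is the obvious numerical consequence you describe. Your one-line appeal to the theorem is all that is needed.
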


Note that a basis of $\mathcal{N}$ can be chosen a such way that any non-generator basis element is represented as right-normed product of the generator basis elements. Such a basis of $\mathcal{N}$ is called {\it the natural basis}. For convenience, further we shall consider only the natural basis of $\mathcal{N}$.

From now on we shall consider a complex solvable (non-nilpotent) Lie superalgebra such that its square is embedded into its nilradical. Let us consider a such solvable Lie superalgebra $\mathcal{R}$ as a direct sum of vector subspaces $\mathcal{N}$ and $\mathcal{Q}$, where $\mathcal{N}$ is its nilradical and $\mathcal{Q}$ is a complementary subspace, i.e., $\mathcal{R}=\mathcal{N}\oplus \mathcal{Q}$.

Let $\mathcal{Q}=\mathcal{Q}_0\oplus \mathcal{Q}_1$ be a decomposition  such that $\mathcal{Q}_i\subseteq \mathcal{R}_i$ for $i=0,1$.

\begin{prop}\label{prop3.12} Let $\mathcal{R}=\mathcal{N}\oplus \mathcal{Q}$ be a solvable Lie superalgebra with nilradical $\mathcal{N}=\mathcal{N}_0\oplus \mathcal{N}_1$ and complementary subspace $\mathcal{Q}=\mathcal{Q}_0\oplus \mathcal{Q}_1$. Then $\mathcal{Q}_1=\{0\}$.
\end{prop}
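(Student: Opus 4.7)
The plan is to take an arbitrary element $q\in\mathcal{Q}_1$ and show that $\mathcal{N}+\mathbb{C}q$ is a nilpotent ideal of $\mathcal{R}$; by maximality of the nilradical this forces $q\in\mathcal{N}$, and since $\mathcal{R}=\mathcal{N}\oplus\mathcal{Q}$ this means $q=0$. The whole argument leans on the standing hypothesis $\mathcal{R}^2\subseteq\mathcal{N}$ that was imposed just before the proposition.

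First I would fix $q\in\mathcal{Q}_1$ and set $\mathcal{N}'=\mathcal{N}+\mathbb{C}q$. To see that $\mathcal{N}'$ is an ideal of $\mathcal{R}$, note that $[\mathcal{N},\mathcal{R}]\subseteq\mathcal{N}$ because $\mathcal{N}$ is the nilradical, and $[q,\mathcal{R}]\subseteq\mathcal{R}^2\subseteq\mathcal{N}$ by the standing assumption, so $[\mathcal{N}',\mathcal{R}]\subseteq\mathcal{N}\subseteq\mathcal{N}'$. In particular $\mathcal{N}'$ is a $\mathbb{Z}_2$-graded subalgebra with even part $\mathcal{N}'_0=\mathcal{N}_0$ and odd part $\mathcal{N}'_1=\mathcal{N}_1+\mathbb{C}q$.

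Next I would verify nilpotency of $\mathcal{N}'$ via Theorem \ref{Gilg}. The sequence $\mathcal{C}^k(\mathcal{N}'_0)=\mathcal{C}^k(\mathcal{N}_0)$ coincides with the corresponding sequence of $\mathcal{N}$ (since $\mathcal{N}'_0=\mathcal{N}_0$) and therefore vanishes for some $p$ by the nilpotency of $\mathcal{N}$. For the odd sequence, I compute
\[
\mathcal{C}^2(\mathcal{N}'_1)=[\mathcal{N}_1+\mathbb{C}q,\mathcal{N}_0]\subseteq [\mathcal{N}_1,\mathcal{N}_0]+[q,\mathcal{N}_0]\subseteq \mathcal{N}_1,
\]
where the inclusion $[q,\mathcal{N}_0]\subseteq\mathcal{N}_1$ uses $\mathcal{R}^2\subseteq\mathcal{N}$ together with parity. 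Hence $\mathcal{C}^{k+1}(\mathcal{N}'_1)\subseteq\mathcal{C}^k(\mathcal{N}_1)$ for $k\ge 1$, and the right-hand side vanishes for some $q'$ by nilpotency of $\mathcal{N}$. Theorem \ref{Gilg} then guarantees that $\mathcal{N}'$ is a nilpotent Lie superalgebra.

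Finally, $\mathcal{N}'$ being a nilpotent ideal of $\mathcal{R}$ must be contained in the nilradical $\mathcal{N}$, so $q\in\mathcal{N}\cap\mathcal{Q}_1=\{0\}$. Since $q$ was arbitrary, $\mathcal{Q}_1=\{0\}$. I do not anticipate a serious obstacle: the only subtle point is setting up the right central sequences of $\mathcal{N}'$ so that Theorem \ref{Gilg} applies, and this is controlled directly by the assumption $\mathcal{R}^2\subseteq\mathcal{N}$, which shifts any bracket involving $q$ into the already nilpotent algebra $\mathcal{N}$.
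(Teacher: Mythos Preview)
Your argument is correct and essentially identical to the paper's own proof: both adjoin an odd element of $\mathcal{Q}_1$ to $\mathcal{N}$, observe that the resulting subspace is an ideal because $\mathcal{R}^2\subseteq\mathcal{N}$, and then apply Theorem~\ref{Gilg} by noting that the even sequence is unchanged while the odd sequence falls into $\mathcal{N}_1$ after one step. The only cosmetic difference is that the paper frames it as a proof by contradiction starting from a nonzero element, whereas you show directly that every $q\in\mathcal{Q}_1$ must vanish.
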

\begin{proof} Assume the contrary, that is there exists nonzero element $0\neq x\in \mathcal{Q}_1$. Set $\mathcal{V}=\mathcal{N}\oplus \mathbb{C}x$. Since $\mathcal{N}$ is an ideal and $\mathcal{R}^2\subseteq \mathcal{N}$, we derive that $\mathcal{V}$ is also an ideal. Moreover, $\mathcal{V}=\mathcal{V}_0\oplus \mathcal{V}_1$, where $\mathcal{V}_0=\mathcal{N}_0$ and $\mathcal{V}_1=\mathcal{N}_1\oplus \mathbb{C}x.$
Due to Theorem \ref{Gilg} the nilpotency of $\mathcal{N}$ implies the existence of $p, q\in \mathbb{N}$ such that ${\cal C}^p(\mathcal{N}_0)={\cal C}^q(\mathcal{N}_1)=\{0\}.$ Taking into account
${\cal C}^p(\mathcal{V}_0)={\cal C}^p(\mathcal{N}_0)=\{0\}$ and
$${\cal C}^{q+1}(\mathcal{V}_1)={\cal C}^{q+1}(\mathcal{N}_1)+{\cal C}^{q+1}(x)={\cal C}^{q}([x,\mathcal{N}_0])\subset{\cal C}^q({\mathcal{N}_1})=\{0\}$$
we obtain the nilpotency of the ideal $\mathcal{V}$, which contradicts to the maximality of $\mathcal{N}$. Therefore, $\mathcal{Q}_1=\{0\}.$
\end{proof}

\begin{prop}\label{prop3.13} Let $\mathcal{R}=\mathcal{N}\oplus \mathcal{Q}_0$ be a solvable Lie superalgebra with nilradical $\mathcal{N}=\mathcal{N}_0\oplus \mathcal{N}_1$. Then $ad_{x|\mathcal{N}}$ is non-nilpotent operator for any $x\in \mathcal{Q}_0.$
\end{prop}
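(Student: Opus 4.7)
The plan is to argue by contradiction, paralleling the proof of Proposition \ref{prop3.12}. Suppose there is $x\in\mathcal{Q}_0$ with $ad_{x|\mathcal{N}}$ nilpotent, and set $\mathcal{V}=\mathcal{N}\oplus\mathbb{C}x$. Because $\mathcal{R}^2\subseteq\mathcal{N}$ and $\mathcal{N}$ is an ideal of $\mathcal{R}$, the subspace $\mathcal{V}$ is itself an ideal of $\mathcal{R}$, with $\mathcal{V}_0=\mathcal{N}_0\oplus\mathbb{C}x$ and $\mathcal{V}_1=\mathcal{N}_1$. The objective is to show that $\mathcal{V}$ is a nilpotent ideal of $\mathcal{R}$ strictly containing $\mathcal{N}$, which contradicts the maximality of the nilradical.

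To prove nilpotency of $\mathcal{V}$ I appeal to Engel's theorem for Lie superalgebras (Theorem \ref{thmEngel}): it suffices to verify that $ad_{v|\mathcal{V}}$ is nilpotent for each homogeneous $v\in\mathcal{V}$. Since $\mathcal{R}^2\subseteq\mathcal{N}$, one has $ad_{v}(\mathcal{V})\subseteq\mathcal{N}$ for every $v$, hence $ad_v^k(\mathcal{V})\subseteq (ad_{v|\mathcal{N}})^{k-1}(\mathcal{N})$, so I only need $ad_{v|\mathcal{N}}$ nilpotent for each homogeneous $v\in\mathcal{V}$. For odd $v=y\in\mathcal{V}_1=\mathcal{N}_1$ this is immediate from Engel's theorem applied to the nilpotent superalgebra $\mathcal{N}$. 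For even $v=n+\alpha x\in\mathcal{V}_0$, the two summands $ad_{n|\mathcal{N}}$ (nilpotent by Engel on $\mathcal{N}$) and $ad_{x|\mathcal{N}}$ (nilpotent by the standing hypothesis) are individually nilpotent, but their sum is not automatically so.

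To handle this sum I would invoke Lie's theorem over $\mathbb{C}$ for the representation of the Lie algebra $\mathcal{V}_0$ on $\mathcal{N}$ given by $v\mapsto ad_{v|\mathcal{N}}$. Since $[\mathcal{V}_0,\mathcal{V}_0]\subseteq\mathcal{N}_0$ is nilpotent, $\mathcal{V}_0$ is solvable, so Lie's theorem furnishes a basis of $\mathcal{N}$ in which every $ad_{v|\mathcal{N}}$, $v\in\mathcal{V}_0$, is upper triangular. The operators $ad_{n|\mathcal{N}}$ and $ad_{x|\mathcal{N}}$, already known to be nilpotent, must then have zero diagonals in this basis, and consequently so does $ad_{(n+\alpha x)|\mathcal{N}}=ad_{n|\mathcal{N}}+\alpha\,ad_{x|\mathcal{N}}$. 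Hence every $ad_{v|\mathcal{N}}$ with $v\in\mathcal{V}_0$ is nilpotent, completing the verification required by Engel's theorem; so $\mathcal{V}$ is nilpotent, which contradicts the maximality of $\mathcal{N}$ as the nilradical of $\mathcal{R}$.

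The main obstacle is exactly the step where one must conclude nilpotency of $ad_{n|\mathcal{N}}+\alpha\,ad_{x|\mathcal{N}}$: sums of nilpotent operators are not generally nilpotent, so the argument has to pass through simultaneous trigonalization, for which the solvability of $\mathcal{V}_0$ and Lie's theorem (hence working over $\mathbb{C}$) are essential. A more pedestrian alternative would be to estimate $\mathcal{C}^k(\mathcal{V}_0)$ and $\mathcal{C}^k(\mathcal{V}_1)$ directly by double sums mixing iterates of $ad_x$ with the descending central sequences of $\mathcal{N}$, and then apply Theorem \ref{Gilg}; this is elementary but bookkeeping-heavy.
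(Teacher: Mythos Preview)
Your argument is correct. The paper takes a different, shorter route: it observes that $\mathcal{R}_0=\mathcal{N}_0\oplus\mathcal{Q}_0$ is a solvable Lie algebra (since a Lie superalgebra is solvable iff its even part is, by Kac), then invokes Mubarakzjanov's result on solvable Lie algebras to conclude that $ad_{x|\mathcal{N}_0}$ is non-nilpotent for every nonzero $x\in\mathcal{Q}_0$, and finally notes that since $\mathcal{N}_0$ and $\mathcal{N}_1$ are invariant under the even operator $ad_x$, non-nilpotence on $\mathcal{N}_0$ forces non-nilpotence on $\mathcal{N}$. Your approach instead stays at the superalgebra level, enlarging $\mathcal{N}$ to $\mathcal{V}=\mathcal{N}\oplus\mathbb{C}x$ and proving $\mathcal{V}$ nilpotent via Engel's theorem, with classical Lie trigonalization of the ordinary solvable Lie algebra $\mathcal{V}_0$ acting on $\mathcal{N}$ supplying the key step that a sum of the relevant nilpotent operators is again nilpotent. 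In effect you are reproving Mubarakzjanov's lemma in situ rather than quoting it; this makes the mechanism transparent and also sidesteps the tacit identification of $\mathcal{N}_0$ with the nilradical of $\mathcal{R}_0$ that the paper's citation relies on. As in the paper, the conclusion is really for nonzero $x\in\mathcal{Q}_0$, which you use when asserting $\mathcal{V}\supsetneq\mathcal{N}$.
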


\begin{proof} Due to a result of \cite{Kac} (see Proposition 1.3.3) it is known that a Lie superalgebra is solvable if and only if its even part is solvable. Therefore, the even part of $\mathcal{R}$ is a solvable Lie algebra and thanks to \cite{Mub} we obtain that for an arbitrary $0\neq x\in\mathcal{Q}_0$ the operator $ad_{x|\mathcal{N}_0}$ is non-nilpotent. Since $\mathcal{N}_0$ and $\mathcal{N}_1$ are invariant spaces under the operator $ad_{x}$, the non-nilpotence of $ad_{x|\mathcal{N}_0}$ implies the non-nilpotence of $ad_{x}$. \end{proof}

In order to estimate the codimension of the nilradical of a solvable Lie superalgebra we need the following definition.

\begin{defn} The set $\{d_1, d_2, \dots, d_t\}$ of even superderivations of a Lie superalgebra $\mathcal{L}$ is said to be nil-independent, if the nilpotence of the superderivation $\alpha_1 d_1 + \alpha_2 d_2 + \dots + \alpha_n d_n$ implies $\alpha_i=0, 1 \leq i \leq t$.
\end{defn}

Due to Propositions \ref{prop3.12} - \ref{prop3.13} we conclude that the dimension of $\mathcal{Q}_0$ is not greater than the number of nil-independent even superderivations of $\mathcal{N}$. Therefore, in the same way as in \cite{Snobl} one can prove the following result.

\begin{thm}\label{thm15} Let $\mathcal{R}$ be a solvable Lie superalgebra with nilradical $\cal N$ such that $\mathcal{R}^2\subseteq \mathcal{N}.$ Then
$$dim (\cal R/\cal N) \leq\dim(\cal N/\cal N^2).$$
\end{thm}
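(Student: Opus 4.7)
The plan is to follow the strategy used for ordinary Lie algebras and reduce the theorem to a bound on the cardinality of a nil-independent family of even superderivations of the nilradical. By Proposition~\ref{prop3.12} one may write $\mathcal{R}=\mathcal{N}\oplus\mathcal{Q}_0$ with $\mathcal{Q}_0\subseteq\mathcal{R}_0$, so $\dim(\mathcal{R}/\mathcal{N})=\dim\mathcal{Q}_0$. Fix a basis $\{x_1,\dots,x_s\}$ of $\mathcal{Q}_0$ and set $D_i:=ad_{x_i|\mathcal{N}}$, which are even superderivations of $\mathcal{N}$. For any nonzero tuple $(\alpha_1,\dots,\alpha_s)$, the element $x=\sum_i \alpha_i x_i\in\mathcal{Q}_0$ is nonzero, so Proposition~\ref{prop3.13} forces $ad_{x|\mathcal{N}}=\sum_i\alpha_i D_i$ to be non-nilpotent. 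Hence $\{D_1,\dots,D_s\}$ is nil-independent, and the theorem reduces to the statement that any nil-independent family of even superderivations of a nilpotent Lie superalgebra has cardinality at most $\dim(\mathcal{N}/\mathcal{N}^2)$.

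Every $D_i$ preserves the filtration $\mathcal{N}\supseteq\mathcal{N}^2\supseteq\cdots$ and therefore induces an operator $\bar D_i$ on $\mathcal{N}/\mathcal{N}^2$. The key auxiliary fact I would establish first is that \emph{an even superderivation $d$ of $\mathcal{N}$ is nilpotent if and only if the induced operator $\bar d$ on $\mathcal{N}/\mathcal{N}^2$ is nilpotent}. The forward implication is trivial. For the converse, assuming $d^k(\mathcal{N})\subseteq\mathcal{N}^2$, I would induct on the filtration index $m$ to produce integers $r_m$ with $d^{r_m}(\mathcal{N}^m)\subseteq\mathcal{N}^{m+1}$: writing an element of $\mathcal{N}^{m+1}$ as a bracket $[u,v]$ with $u\in\mathcal{N}^m$, $v\in\mathcal{N}$, and expanding $d^{r}([u,v])$ via Equality~\eqref{eqder}, every term of the binomial sum has either $u$ hit at least $r_m$ times (so $d^{i}(u)\in\mathcal{N}^{m+1}$) or $v$ hit at least $k$ times (so $d^{r-i}(v)\in\mathcal{N}^2$); choosing $r_{m+1}=r_m+k-1$ suffices, and the nilpotency of $\mathcal{N}$ then forces $d^M=0$ for some $M$. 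This lemma immediately implies that $\{\bar D_1,\dots,\bar D_s\}$ is itself nil-independent on the quotient.

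Finally, I would use the hypothesis $\mathcal{R}^2\subseteq\mathcal{N}$ to extract commutativity modulo $\mathcal{N}^2$. Since $[x_i,x_j]\in\mathcal{R}^2\subseteq\mathcal{N}$, the bracket $[D_i,D_j]$ equals (up to sign) the inner derivation by $[x_i,x_j]\in\mathcal{N}$, and any inner derivation by an element of $\mathcal{N}$ sends $\mathcal{N}$ into $\mathcal{N}^2$. Therefore $\bar D_1,\dots,\bar D_s$ pairwise commute on $\mathcal{N}/\mathcal{N}^2$ and can be simultaneously triangularized over $\mathbb{C}$. In such a basis a linear combination $\sum_i\alpha_i\bar D_i$ is nilpotent exactly when each diagonal entry $\sum_i\alpha_i(\bar D_i)_{jj}$ vanishes. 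Nil-independence of $\{\bar D_1,\dots,\bar D_s\}$ then forces the $s$ ``diagonal vectors'' to be linearly independent in $\mathbb{C}^{\dim(\mathcal{N}/\mathcal{N}^2)}$, yielding $s\leq\dim(\mathcal{N}/\mathcal{N}^2)$, which is the conclusion.

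The main obstacle is the nilpotency criterion on $\mathcal{N}/\mathcal{N}^2$: the induction is natural, but one must verify carefully that the Leibniz-type formula~\eqref{eqder} specializes cleanly to even superderivations (where all signs $(-1)^{|d||x|}$ are trivial) and that the ``either many $d$'s fall on $u$ or many fall on $v$'' dichotomy is arranged uniformly across the binomial expansion so that the recursion $r_{m+1}=r_m+k-1$ really delivers $d^{r_m}(\mathcal{N}^m)\subseteq\mathcal{N}^{m+1}$ at every step.
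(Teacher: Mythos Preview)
Your proposal is correct and follows essentially the same route as the paper: the paper reduces to Propositions~\ref{prop3.12}--\ref{prop3.13} to obtain a nil-independent family $\{ad_{x_i|\mathcal{N}}\}$ and then defers to \cite{Snobl} for the bound, while the block-triangular machinery in \eqref{eq222} and Remark~\ref{rem111} is precisely your ``$d$ nilpotent iff $\bar d$ nilpotent on $\mathcal{N}/\mathcal{N}^2$'' lemma. Your commutativity-on-the-quotient step via $[D_i,D_j]\in Inder(\mathcal{N})$ and the diagonal-vector linear-independence argument are exactly the ingredients of the \v{S}nobl argument the paper invokes, so there is no substantive difference.
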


\begin{rem} \label{rem3.16} It should be noted that the condition $\mathcal{R}^2\subseteq \mathcal{N}$ in Theorem \ref{thm15} is essential. Indeed, if we consider the Lie superalgebra $\cal R$ from Example \ref{exam2222}, then $\cal R^{2}\nsubseteq \cal N$ and $dim (\cal R/\cal N)=4 > 2=\dim(\cal N/\cal N^2).$
\end{rem}

Let us assume that $\left\{x_1,\dots,x_k,y_1,\dots,y_s\right\}$ are generators basis elements of $\mathcal{N}.$ Set $X=\Span\left\{x_1,\dots,x_k\right\}$ and $Y=\Span\left\{y_1,\dots,y_s\right\}$.

Let $D$ be an even superderivation of $\mathcal{N}.$ Then its matrix form in a basis of $\cal N$ which agreed with natural filtration \eqref{filtr} has the following form:
\begin{equation}\label{eq222}
\left(\begin{array}{ccccccccccccc}
  D^0_{11} & D^0_{12} & \dots & D^0_{1q}& \vline &&&&\\
         & D^0_{22}   & \dots & D^0_{2q}& \vline &&O&&\\
         && \ddots  & \vdots& \vline &&&&\\
         &         &        & D^0_{qq}& \vline &&&&\\
         \hline
         &&&& \vline&  D^1_{11} & D^1_{12} & \dots & D^1_{1p}\\
         &&&& \vline&         & D^1_{22}   & \dots & D^1_{2p}\\
         &&O&& \vline&          &&\ddots  & \vdots\\
         &&&& \vline& &         &        & D^1_{pp}
  \end{array}\right),
\end{equation}
where submatrices $D^0_{11}\in M_{k,k}$ and $D^1_{11}\in M_{s,s}$ are restrictions and projections of $D$ on $X$ and $Y$, respectively.

\begin{rem} \label{rem111} Note that $D$ is nilpotent if and only if its submatrices  $D_{11}^{0}$ and $D_{11}^{1}$ are nilpotent. One of these implications is obvious; the other is derived as follows. Applying equation (\ref{eqder}) with assuming the existence of $\lambda, \mu \in \mathbb{N}$ such that
$\big({D_{11}^{0}}\big)^{\lambda}=\big({D_{11}^{1}}\big)^{\mu}=0$ we deduce the existence of $\tau\in \mathbb{N}$ such that
$\big(D^{0}_{ll}\big)^{\tau}=\big(D^{1}_{tt}\big)^{\tau}=0$ for all  $l=2,\dots, q$ and $t=2,\dots, p.$ This implies that
the block upper triangular matrix of $D^{\tau}$ has vanishing diagonal blocks and is consequently nilpotent, implying also the nilpotency of $D$ itself.
\end{rem}

Due to Proposition \ref{prop3.12} we have $\cal R=\cal N\oplus \cal Q_0$. We decompose $Q_0$ as follows: $\cal Q_0=\cal Q_0^1\oplus \cal Q_0^2,$
where $\cal Q_0^2=\{z\in \cal Q_0 \ | \ {ad_z}_{|X} - \ \mbox{is nilpotent}\}$ and $\cal Q_0^1$ is complementary subspace to $\cal Q_0^2$.
Thanks to Proposition \ref{prop3.13} and Remark \ref{rem111} we derive that ${ad_z}_{|Y}$ is not nilpotent for any $z\in \cal Q_0^2$.

Next result presents upper bounds on dimensions of $\cal Q_{0}^{1}$ and $\cal Q_0^{2}.$

\begin{thm}\label{thm3.17} For a solvable Lie superalgbera $\cal R=\cal N_{0}\oplus \cal N_{1}\oplus\cal Q_0^1\oplus \cal Q_0^2$ with nilradical $\mathcal{N}$ such that $\cal R^2\subseteq \cal N$ the following inequalities hold true
$$\dim\cal Q_{0}^{1}\leq\cal \dim \Big(\cal N_{0}/\big(\cal N_{0}^{2}+[\cal N_{1}, \cal N_{1}]\big)\Big) \quad \mbox{and} \quad \dim\cal Q_{0}^{2}\leq\cal \dim (\cal N_{1}/[\cal N_{1},\cal N_{0}]).$$
\end{thm}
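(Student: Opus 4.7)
The plan is to produce, separately for $\mathcal{Q}_0^1$ and $\mathcal{Q}_0^2$, a set of nil-independent mutually commuting operators on $X$ and $Y$ respectively, and then invoke the Mubarakzjanov-type bound already used in the proof of Theorem \ref{thm15}. Since $\{x_1,\dots,x_k,y_1,\dots,y_s\}$ is a natural generating basis of $\mathcal{N}$, the even (resp.\ odd) generators give a complement to $\mathcal{N}^2\cap\mathcal{N}_0=\mathcal{N}_0^2+[\mathcal{N}_1,\mathcal{N}_1]$ inside $\mathcal{N}_0$ (resp.\ to $\mathcal{N}^2\cap\mathcal{N}_1=[\mathcal{N}_0,\mathcal{N}_1]$ inside $\mathcal{N}_1$), so that $\dim X=\dim(\mathcal{N}_0/(\mathcal{N}_0^2+[\mathcal{N}_1,\mathcal{N}_1]))$ and $\dim Y=\dim(\mathcal{N}_1/[\mathcal{N}_0,\mathcal{N}_1])$. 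It therefore suffices to prove $\dim\mathcal{Q}_0^1\le\dim X$ and $\dim\mathcal{Q}_0^2\le\dim Y$.

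For the first inequality I would fix a basis $\{z_1,\dots,z_l\}$ of $\mathcal{Q}_0^1$ and consider the induced maps $\overline{ad_{z_i}}|_X$ on $X\cong\mathcal{N}_0/(\mathcal{N}_0\cap\mathcal{N}^2)$, which are the top-left blocks $D^0_{11}$ of $ad_{z_i}$ in the matrix form \eqref{eq222}. These operators commute: the hypothesis $\mathcal{R}^2\subseteq\mathcal{N}$ gives $[z_i,z_j]\in\mathcal{N}_0$, and $[\mathcal{N}_0,X]\subseteq\mathcal{N}_0^2\subseteq\mathcal{N}^2$, so $\overline{ad_{[z_i,z_j]}}|_X=0$. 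They are nil-independent: if $\sum_i\alpha_i\overline{ad_{z_i}}|_X$ is nilpotent, put $w=\sum_i\alpha_iz_i\in\mathcal{Q}_0^1$; the defining property of $\mathcal{Q}_0^2$ yields $w\in\mathcal{Q}_0^2$, so $w=0$ by $\mathcal{Q}_0^1\cap\mathcal{Q}_0^2=\{0\}$, forcing all $\alpha_i=0$.

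For the second inequality the argument is parallel. Taking a basis $\{v_1,\dots,v_{l'}\}$ of $\mathcal{Q}_0^2$ and the induced maps $\overline{ad_{v_i}}|_Y$, commutativity is obtained in the same way. Nil-independence now uses an extra step: if $\sum_i\alpha_i\overline{ad_{v_i}}|_Y$ is nilpotent and $v=\sum_i\alpha_iv_i\in\mathcal{Q}_0^2$, then $\overline{ad_v}|_Y$ is nilpotent by assumption while $\overline{ad_v}|_X$ is nilpotent by the very definition of $\mathcal{Q}_0^2$; Remark \ref{rem111} then yields nilpotency of $ad_v|_\mathcal{N}$, and Proposition \ref{prop3.13} forces $v=0$.

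The main obstacle, shared with the proof of Theorem \ref{thm15}, is the final Mubarakzjanov-type step that converts a nil-independent commuting family of operators on a finite-dimensional space $V$ into the cardinality bound $\le\dim V$. I would handle it by Jordan decomposition: the semisimple parts of commuting operators again commute and are simultaneously diagonalizable, while nil-independence forces these semisimple parts to be linearly independent, bounding their number by $\dim V$. Applying this to the two families on $X$ and $Y$ delivers both inequalities of the theorem.
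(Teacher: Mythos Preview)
Your proposal is correct and follows essentially the same route as the paper: pass to the induced operators $D_{11}^0$ and $D_{11}^1$ on the generator spaces $X$ and $Y$, use $\mathcal{R}^2\subseteq\mathcal{N}$ to get commutativity of these blocks, establish nil-independence separately for $\mathcal{Q}_0^1$ and $\mathcal{Q}_0^2$, and then bound by $\dim X$ and $\dim Y$ via the maximal number of nil-independent commuting matrices. Your nil-independence argument for $\mathcal{Q}_0^1$ (if $\overline{ad_w}|_X$ is nilpotent then $w\in\mathcal{Q}_0^2$ by definition, forcing $w=0$) is in fact cleaner than the paper's phrasing, and your explicit Jordan-decomposition justification of the final bound fills in a step the paper leaves implicit.
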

\begin{proof}

Let $D$ be an even superderivation of $\mathcal{N}.$
Then, we choose a basis $\Gamma=\left\{x_1,\dots x_{n},y_1,\dots ,y_{m}\right\}$ of the nilpotent algebra $\cal N$ that is agreed with natural filtration (\ref{filtr}). The main advantage of working in the basis $\Gamma$ lies in the fact that any  even superderivation $D,$ is fully specified once its action on the generator basis elements of $\cal N.$ Moreover, the matrix of any even superderivation of $\cal N$ is upper block triangular in the basis $\Gamma$ has the form \eqref{eq222}.

Due to the relation $D([a,b])=[D(a),b]+[a,D(b)]$ we conclude that elements of the diagonal blocks $D^{0}_{ll},$ $(l=2,\dots, q)$ and $D^{1}_{tt},$ $(t=2,\dots, p)$ are linear functions of elements of $D_{11}^0$, $D_{11}^1$ and structure constants of $\mathcal{N}$.

The condition $\mathcal{R}^2\subseteq\mathcal{N}$ leads that $[ad_{z_1},ad_{z_2}]=ad_{[z_1,z_2]}\in Inder(\mathcal{N})$
for any $z_{1},z_{2}\in\mathcal{Q}_{0}.$

Consider a set of even superderivations $\{D_{1},\dots,D_{l}\}$ of $\mathcal{N}$ such that
$[D_i,D_j]\in Inder(\mathcal{N})$ for any $i,j\in \{1, \dots, l\}.$ This, implies
\begin{align}\label{commm}
[D_{ll}^{0i},D_{ll}^{0j}]=0, && [D_{tt}^{1i},D_{tt}^{1j}]=0,
\end{align}
where $l=1,\dots,q, t=1,\dots,p $ and $i,j\in \{1,\dots, l\}.$

The equivalence from Remark \ref{rem111} leads that even superderivations $D_1,\dots, D_l$ are linearly nil-independent if and only if their cooresponding submatrices
$\{D_{11}^{01},\dots,D_{11}^{0l}\}$ and $\{D_{11}^{11},\dots,D_{11}^{1l}\}$ are linearly nil-independent. Together with Equalities \eqref{commm} it means that the number of linearly nil-independent outer even superderivations of $\cal N$ acting nilpotently on
$\left\{y_1,\dots,y_s\}\right.$ and commuting to inner even superderivations is bounded from above by the maximal number of linearly nil-independent commuting matrices of dimension $k\times k$. This number is equal to $\dim \cal N_0 - \dim \Big([\cal N_0,\cal N_0] + [\cal N_1,\cal N_1]\Big),$ finishing the first estimation.

Similarly, the number of linearly nil-independent outer even superderivations of $\cal N$ acting nilpotently on $\left\{x_1,\dots, x_k\}\right.$ and commuting to inner even superderivations is bounded from above by the maximal number of linearly nil-independent commuting matrices of dimension $s\times s$. This number is equal to $\dim \cal N_1 - \dim ([\cal N_1,\cal N_0]),$ which  finished the second estimation.
\end{proof}

Let us introduce the notion of a {\it characteristically nilpotent Lie superalgebra} as superalgebra whose all superderivations are nilpotent. This notion is agreed with the notion of characterictically nilpotent Lie algebra. For detailed results on characteristically nilpotent Lie algebras we refer reader to \cite{CharNil} and references therein. It should be noted that Theorem \ref{thmEngel} implies that any characteristically nilpotent Lie superalgebra is nilpotent.

Here is an example of characteristically nilpotent Lie superalgebra.
\begin{exam} \label{exam33} Consider an eight dimensional nilpotent Lie superalgebra
$\mathcal{N}=\mathcal{N}_0\oplus \mathcal{N}_1$ with the following table of multiplications:
$$\left\{\begin{array}{llll}
[x_i,x_1]=x_{i+1},\ 2\le i\le 6,& [x_2,x_5]=x_7,& [x_2,x_3]=x_6, \\[1mm] [x_3,x_4]=-x_7, & [x_2,x_4]=x_7, &[y_1,y_1]=x_7.\\[1mm]
\end{array}\right.$$
A straightforward computations lead that the matrix form of superderivations of $\mathcal{N}$ has the form
$$\left(\begin{array}{cccccccc}
0 & 2b_4 & a_3 & a_4 & a_5 & a_6 & a_7&\alpha_1 \\
0 & 0 & b_3 & b_4 & b_5 & b_6 & b_7&0 \\
0 & 0 & 0&b_3 & b_4 & b_5-a_3 & b_6-a_4-a_5 &0\\
0 & 0 & 0&0&b_3 &3b_4 & b_5-a_3+a_4 &0\\
0 & 0 & 0&0&0&b_3 &5b_4-a_3&0 \\
0 & 0 & 0&0&0&0&b_3 +2b_4 &0\\
0 & 0 & 0&0&0&0&0 &0\\
0 & 0 & 0&0&0&\alpha_1&\beta_7 &0\\
\end{array}\right).$$
It is easy to see that for any $d\in Der(\mathcal{N})$ one has $d^3=0$, i.e., $\mathcal{N}$ is a characteristically nilpotent Lie superalgebra.
\end{exam}

The characteristically nilpotency of a Lie superalgebra implies the characteristically nilpotency of its even part, while the converse is not true, in general.

Indeed, if we consider an nine dimensional Lie superalgebra with the table of multiplication as follows
$$\left\{\begin{array}{llll}
[x_1,x_i]=x_{i+1},\ 2\le i\le 6, & [x_2,x_3]=x_6, & [x_2,x_4]=x_7,\\[1mm]
[x_2,x_5]=x_7,& [x_3,x_4]=-x_7, & [y_1,y_2]=x_7,\\[1mm]
\end{array}\right.$$
then even superderivations of the superalgebra have the following matrix form
$$\left(\begin{array}{ccccccccc}
0 & 2b_4 & a_3 & a_4 & a_5 & a_6 & a_7& 0&0 \\
0 & 0 & b_3 & b_4 & b_5 & b_6 & b_7&0&0 \\
0 & 0 & 0&b_3 & b_4 & b_5-a_3 & b_6-a_4-a_5&0&0 \\
0 & 0 & 0&0&b_3 &3b_4 & b_5-a_3+a_4 &0&0\\
0 & 0 & 0&0&0&b_3 &5b_4-a_3 &0&0\\
0 & 0 & 0&0&0&0&b_3 +2b_4&0&0 \\
0 & 0 & 0&0&0&0&0&0&0 \\
0 & 0 & 0&0&0&0&0&c_1&0 \\
0 & 0 & 0&0&0&0&0&0&-c_1\\
\end{array}\right),$$
which is not nilpotent with $c_1\neq 0.$ Hence, this is an example of non-characteristically nilpotent Lie superalgebra with characteristically nilpotent even part.

\begin{prop}\label{p31} Let $\mathcal{R}=\mathcal{N}\oplus \mathcal{Q}_0$ be a solvable Lie superalgebra. If $\mathcal{N}_0$ is a characteristically nilpotent Lie algebra, then $\mathcal{Q}_0=\mathcal{Q}_0^2$.
\end{prop}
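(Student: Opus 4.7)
The strategy is to show the inclusion $\mathcal{Q}_0\subseteq\mathcal{Q}_0^2$, which combined with the decomposition $\mathcal{Q}_0=\mathcal{Q}_0^1\oplus\mathcal{Q}_0^2$ forces $\mathcal{Q}_0^1=\{0\}$. Fix an arbitrary $z\in\mathcal{Q}_0$; the goal is to verify that ${ad_{z}}_{|X}$ is a nilpotent operator.

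First, I would observe that $ad_z$ restricts nicely to $\mathcal{N}_0$. Since $\mathcal{N}$ is an ideal of $\mathcal{R}$ and $z$ is even, the operator $ad_z$ preserves both $\mathcal{N}$ and the $\mathbb{Z}_2$-grading, so in particular $ad_z(\mathcal{N}_0)\subseteq\mathcal{N}_0$. Consequently, the restriction $\delta:={ad_z}_{|\mathcal{N}_0}$ is an ordinary derivation of the Lie algebra $\mathcal{N}_0$ (the even bracket on $\mathcal{N}_0$ being the usual Lie bracket).

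Next, I would invoke the hypothesis: $\mathcal{N}_0$ is characteristically nilpotent, which by definition means every derivation of $\mathcal{N}_0$ is a nilpotent endomorphism. Applied to $\delta$, this yields nilpotence of ${ad_z}_{|\mathcal{N}_0}$ as a linear map on $\mathcal{N}_0$.

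Finally, since $X\subseteq\mathcal{N}_0$ consists of the even generator basis elements, the block $D_{11}^{0}$ in the matrix form \eqref{eq222} of $ad_z$ is precisely the restriction-and-projection of ${ad_z}_{|\mathcal{N}_0}$ onto $X$ along the remaining part of the filtration. This diagonal block of a block upper-triangular nilpotent operator must itself be nilpotent, i.e.\ ${ad_z}_{|X}$ is nilpotent. Therefore $z\in\mathcal{Q}_0^2$, and as $z\in\mathcal{Q}_0$ was arbitrary we conclude $\mathcal{Q}_0=\mathcal{Q}_0^2$.

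The only non-mechanical point is the step identifying the leading block $D_{11}^{0}$ of $ad_z$ with the induced action on $X$ so that nilpotence on all of $\mathcal{N}_0$ descends to nilpotence on $X$; this is where the natural basis adapted to the filtration \eqref{filtr} is used, and is essentially the content of Remark \ref{rem111} read in reverse. Everything else is an immediate application of the definitions and of the characteristic nilpotency hypothesis on $\mathcal{N}_0$.
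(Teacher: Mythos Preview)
Your proof is correct and follows essentially the same route as the paper's own argument: for $z\in\mathcal{Q}_0$, the restriction $ad_z|_{\mathcal{N}_0}$ is a derivation of the Lie algebra $\mathcal{N}_0$, hence nilpotent by the characteristic nilpotency hypothesis, and this forces $z\in\mathcal{Q}_0^2$. The paper's proof is terser and does not spell out the passage from nilpotence on all of $\mathcal{N}_0$ to nilpotence of the leading block $D_{11}^0$ on $X$, which you make explicit via the block upper-triangular form; this extra care is harmless and indeed clarifies the last implication.
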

\begin{proof} Let $\mathcal{N}_0$ is a characteristically nilpotent Lie algebra. Consider $z\in \mathcal{Q}_0$, then $ad(z)_{|\mathcal{N}_0}\in Der(\mathcal{N}_0)$. Therefore, for any $z\in \mathcal{Q}_0$ we derive that $ad(z)_{|\mathcal{N}_0}$ is nilpotent map. Hence, $\mathcal{Q}_0=\mathcal{Q}_0^2.$
\end{proof}

\begin{rem} But the condition $\mathcal{Q}_0=\mathcal{Q}_0^2$ does not always mean that $\mathcal{N}_0$ is a characteristically nilpotent Lie algebra. Indeed, if we consider the nilpotent Lie superalgebra $\cal N$ of maximal rank with non-characteristically nilpotent of even part and with the table of multiplications: $[y_1,y_1]=x_1, \quad [y_2,y_2]=x_2,$ then one can check that $\cal Q_0=\cal Q_0^2.$
\end{rem}

\section{Description of complex solvable Lie superalgebras of maximal rank}

In this section we study the structure of maximal dimensional solvable Lie superalgebra $\cal R$ with nilradical $\cal N$ of maximal rank and under the conditions: $\cal R^2 \subseteq \cal N$ and $[\cal R_1, \cal R_1] \subseteq [\cal R_0, \cal R_0]$.

For an nilpotent Lie superalgebra of maximal rank one can associate a solvable Lie superalgebra of maximal rank \cite{Wang2003}.

\begin{defn} A solvable Lie superalgebra $\mathcal{R}_{\mathcal{T}_{max}}$ is called {\it of maximal rank} if $\dim \mathcal{T}_{max}=\dim(\mathcal{N}/\mathcal{N}^2)$.
\end{defn}

This notion is agreed with the notion of solvable Lie algebra of maximal rank \cite{Meng}. In fact, in \cite{Khal} it was shown that a complex solvable Lie algebra of maximal rank can also be defined as a solvable Lie algebra $\mathcal{R}$ with the condition $\codim \mathcal{N}=\dim(\mathcal{N}/\mathcal{N}^2).$

 Namely, the following result was obtained.
\begin{thm} \label{maxrankalgebra}  A complex finite-dimensional solvable Lie algebra with nilradical $\mathcal{N}$ of maximal rank and complementary subspace $\mathcal{Q}$  to the nilradical $\mathcal{N}$ of dimension equal to the rank of $\mathcal{N}$ is isomorphic to an algebra $\cal R_{\mathcal{T}_{max}}$.
\end{thm}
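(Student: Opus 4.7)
The plan is to construct an explicit isomorphism $\mathcal{R}\to\mathcal{R}_{\mathcal{T}_{max}}$ by modifying a basis of the complementary subspace $\mathcal{Q}$ so that its elements act on $\mathcal{N}$ by pairwise commuting semisimple derivations whose span is a maximal torus. Set $t:=\dim\mathcal{Q}=\dim(\mathcal{N}/\mathcal{N}^{2})=\dim\mathcal{T}_{max}$ and fix any basis $\{q_{1},\dots,q_{t}\}$ of $\mathcal{Q}$; the target is to produce elements $n_{i}\in\mathcal{N}$ such that the modified basis $q_{i}':=q_{i}-n_{i}$ has the desired semisimple, commuting action on $\mathcal{N}$.

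First, I would apply the classical Jordan--Chevalley decomposition (the Lie-algebra analogue of Theorem \ref{thm31}) to each restriction $ad_{q_{i}}|_{\mathcal{N}}$, obtaining $ad_{q_{i}}|_{\mathcal{N}}=S_{i}+N_{i}$ with $S_{i}\in\Der(\mathcal{N})$ semisimple, $N_{i}\in\Der(\mathcal{N})$ nilpotent, and $[S_{i},N_{i}]=0$. The crucial claim is that each $N_{i}$ is inner, i.e.\ $N_{i}=ad_{n_{i}}|_{\mathcal{N}}$ for some $n_{i}\in\mathcal{N}$. To prove this, one argues that the classes of $\{S_{1},\dots,S_{t}\}$ in $\Der(\mathcal{N})/Inder(\mathcal{N})$ already saturate the maximal number $\dim(\mathcal{N}/\mathcal{N}^{2})$ of linearly nil-independent outer derivations permitted by Theorem \ref{thm15}; if any $N_{i}$ had a nonzero outer class, one could exhibit a derivation violating this bound. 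Having $N_{i}=ad_{n_{i}}|_{\mathcal{N}}$ inner, set $q_{i}':=q_{i}-n_{i}$ so that $ad_{q_{i}'}|_{\mathcal{N}}=S_{i}$ is semisimple.

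Next, I would check that the $S_{i}$ pairwise commute, so that $\mathcal{T}':=\Span\{S_{1},\dots,S_{t}\}$ is a torus of $\mathcal{N}$. Since $\mathcal{R}^{2}\subseteq\mathcal{N}$, the brackets $[q_{i}',q_{j}']$ lie in $\mathcal{N}$, hence $[S_{i},S_{j}]=ad_{[q_{i}',q_{j}']}|_{\mathcal{N}}\in Inder(\mathcal{N})$ is nilpotent. Applying Theorem \ref{thmMostow} to the solvable linear Lie algebra $ad(\mathcal{R})|_{\mathcal{N}}\subseteq\Der(\mathcal{N})_{0}$ yields a maximal fully reducible subalgebra, which meets the nilpotent ideal $Inder(\mathcal{N})$ trivially and therefore embeds into the outer derivations; a dimension count via Theorem \ref{thm15} and Corollary \ref{corequaldim} identifies it with $\mathcal{T}_{max}$. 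A final Mostow-conjugation adjustment (absorbed into the $n_{i}$'s) aligns $\mathcal{T}'$ with $\mathcal{T}_{max}$, and the map $\varphi:\mathcal{R}\to\mathcal{N}\,\dot{+}\,\mathcal{T}_{max}$ defined by $\varphi|_{\mathcal{N}}=\mathrm{id}_{\mathcal{N}}$ and $\varphi(q_{i}')=S_{i}$ is then the required isomorphism, since the defining brackets of Example \ref{exampletoralsolvable} are matched by construction.

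The main obstacle is the innerness step above: proving that the nilpotent parts $N_{i}$ are forced into $Inder(\mathcal{N})$. This rests on the structural fact that, for a nilradical $\mathcal{N}$ of maximal rank, every outer derivation of $\mathcal{N}$ admits a semisimple representative modulo $Inder(\mathcal{N})$ (diagonal along the weight decomposition with respect to $\mathcal{T}_{max}$), so no nilpotent outer class survives once $\mathcal{Q}$ saturates the codimension bound of Theorem \ref{thm15}. The subsequent commutation and identification steps are then routine applications of Mostow's theorem.
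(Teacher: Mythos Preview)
The paper does not prove this theorem itself; it is quoted from \cite{Khal}. The method, however, is visible in the super-analogue (Lemmas~\ref{lem3.22}--\ref{lem3.23} and Theorem~\ref{thm3.24}): one works modulo $\mathcal{N}^{t}$ for increasing $t$, at each stage making explicit basis changes on the generators of $\mathcal{N}$ and on the $z_{i}\in\mathcal{Q}$ to force $[x_{i},z_{j}]$, $[y_{i},z_{j}]$, and $[z_{i},z_{j}]$ into diagonal (resp.\ zero) form. Your Jordan--Chevalley/Mostow approach is structurally different and more conceptual, but as written it has two genuine gaps.

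First, your innerness argument for $N_{i}$ does not work. You appeal to Theorem~\ref{thm15}, saying the semisimple parts $S_{i}$ already saturate the nil-independence bound, so a nonzero outer class for some $N_{i}$ would violate it. But that theorem bounds the number of \emph{nil-independent} derivations, and each $N_{i}$ is nilpotent, so adjoining it never violates nil-independence. Your supporting ``structural fact'' that every outer derivation of a maximal-rank $\mathcal{N}$ has a semisimple representative modulo $Inder(\mathcal{N})$ is false already for the three-dimensional Heisenberg algebra (which has maximal rank): the derivation $x\mapsto y,\ y\mapsto 0,\ z\mapsto 0$ is nilpotent and not inner. What actually forces $N_{i}\in Inder(\mathcal{N})$ is the system of constraints $[ad_{q_{i}},ad_{q_{j}}]\in Inder(\mathcal{N})$ for all $j$ together with maximality of rank; this needs a real argument, not a dimension count.

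Second, and more seriously, you never establish $[q_{i}',q_{j}']=0$ in $\mathcal{R}$. Even granting $[S_{i},S_{j}]=0$, this only yields $ad_{[q_{i}',q_{j}']}|_{\mathcal{N}}=0$, i.e.\ $c_{ij}:=[q_{i}',q_{j}']\in Z(\mathcal{N})$. Your map $\varphi$ then fails to be a homomorphism: $\varphi(c_{ij})=c_{ij}$ because $\varphi|_{\mathcal{N}}=\mathrm{id}$, whereas $[\varphi(q_{i}'),\varphi(q_{j}')]=[S_{i},S_{j}]=0$. Killing these central obstructions requires a further correction $q_{i}'\mapsto q_{i}'+w_{i}$ with $w_{i}\in Z(\mathcal{N})$ satisfying $S_{j}(w_{i})-S_{i}(w_{j})=c_{ij}$, a cohomological problem you do not address; no ``Mostow-conjugation adjustment'' on the derivation side touches it. The filtration method of \cite{Khal} (mirrored in Lemma~\ref{lem3.23}) handles exactly this issue, adjusting the $z_{i}$ level by level to force $[z_{i},z_{j}]\equiv 0\pmod{\mathcal{N}^{t}}$ simultaneously with the diagonalization.
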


Thanks to Theorem \ref{thm3.17} we conclude that a solvable Lie superalgebra $\cal R=\cal N_{0}\oplus \cal N_{1}\oplus\cal Q_0$ with $\cal Q_0=\cal Q_0^1\oplus \cal Q_0^2$ and nilradical $\cal N$ of maximal rank has the maximal dimension only in case of
$$\dim\cal Q_{0}^{1}=\cal \dim \Big(\cal N_{0}/\big(\cal N_{0}^{2}+[\cal N_{1}, \cal N_{1}]\big)\Big) \quad \mbox{and} \quad \dim\cal Q_{0}^{2}=\cal \dim (\cal N_{1}/[\cal N_{1},\cal N_{0}]).$$

Therefore, the maximal dimension of a solvable Lie superalgebra with nilradical of maximal rank is equal to $\dim \cal N+\dim \cal T_{max},$ i.e., $\dim Q_0=\dim \cal T_{max}$.

\begin{rem} \label{rem222} Straightforward computations show that solvable Lie superalgebras with nilradicals $\cal N^1$ and $\cal N^2$ and with dimensions of complementary subspaces to nilradicals equal to the numbers of generators basis elements of nilradicals are isomorphic to superalgebras of the form $\cal R_{\cal T_{max}}$. Moreover, from \cite{Khu} it is known that a solvable Lie superalgebra with nilradical $\cal N^3$ such that $\codim \cal N^3= \dim \cal T_{max}$ is isomorphic to a superalgebra $\cal R_{\cal T_{max}}$, where $\cal T_{max}$ is a maximal torus of $\cal N^3$.
\end{rem}

Let $\cal N$ be a nilpotent Lie superalgebra of maximal rank, that is,
$$\dim\cal (N / \cal N^2)=\dim \Big(\cal N_{0}/\big(\cal N_{0}^{2}+[\cal N_{1}, \cal N_{1}]\big)\Big)+\dim (\cal N_{1}/[\cal N_{1},\cal N_{0}])=\dim \cal T_{max}.$$

\begin{defn}
A solvable Lie superalgebra $\mathcal{R}$ with nilradical $\cal N$ is called a  maximal solvable extension of the nilpotent Lie superalgebra $\mathcal{N}$, if $\codim \cal N$ is maximal.
\end{defn}

Since we consider the case of solvable Lie superalgebras for which Lie's Theorem is applicable, then there exists a basis of $\mathcal{R}$ such that all operators ${ad_{z}}, \ z\in \mathcal{R}$ have upper-triangular matrix form. In particular, all operators $ad_{z|\mathcal{N}}$ $(z\in \mathcal{Q}_0)$ in the basis of $\mathcal{N}$ have upper-triangular matrix form too.

According to Theorem \ref{thm31} an operator $ad_{{z}|\mathcal{N}}, \ z\in \mathcal{Q}_0$ admits a decomposition $ad_{{z}|\mathcal{N}}=d_{0}+d_{1},$ where $d_{0}$ is diagonalizable and $d_{1}$ is nilpotent even superderivations of $\mathcal{N}$ such that $[d_{0}, d_{1}]=0$.

We put
    $$ad_{{z}|\mathcal{N}}=\left(\begin{array}{cccccccc}
    \gamma_{1}&*&*&\dots &*&*\\[1mm]
    0&\gamma_{2}&*&\dots &*&*\\[1mm]
    0&0&\gamma_{3}&\dots &*&*\\[1mm]
    \vdots&\vdots&\vdots&\ddots &\vdots&\vdots\\[1mm]
    0&0&0&\dots &\gamma_{n-1}&*\\[1mm]
    0&0&0&\dots &0&\gamma_{n}\\[1mm]
    \end{array}\right).$$

    Due to the upper-triangularity of $ad_{{z}|\mathcal{N}}$ we conclude that $d_0=\diag(\gamma_{1},  \dots, \gamma_{n}).$

\begin{prop} \label{prop3.20}Let $\cal R=\cal N \oplus \cal Q_0$ be a maximal solvable extension of nilpotent Lie superalgebra of maximal rank. Then $\dim \cal Q_0 = \dim\cal T_{max}.$
\end{prop}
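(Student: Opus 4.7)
The strategy is to sandwich $\dim \cal Q_0$ between $\dim \cal T_{max}$ from above and below, so that the two inequalities collapse to the claimed equality.

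For the upper bound I would invoke Theorem \ref{thm3.17}, which is available because this section carries the standing hypothesis $\cal R^2 \subseteq \cal N$. Summing the two estimates supplied by that theorem gives
$$\dim \cal Q_0 = \dim \cal Q_0^1 + \dim \cal Q_0^2 \leq \dim\bigl(\cal N_0/(\cal N_0^2+[\cal N_1,\cal N_1])\bigr) + \dim\bigl(\cal N_1/[\cal N_1,\cal N_0]\bigr) = \dim(\cal N/\cal N^2),$$
and the maximal-rank hypothesis on $\cal N$, namely $\dim(\cal N/\cal N^2) = \dim \cal T_{max}$, completes this direction.

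For the reverse inequality the plan is to exhibit a concrete solvable extension of $\cal N$ that realizes this upper bound, so that the maximality of $\cal R$ forces $\dim \cal Q_0$ to be at least as large. Example \ref{exampletoralsolvable} provides exactly such a witness: the superalgebra $\cal R_{\cal T_{max}} = \cal N \dot{+} \cal T_{max}$ with $[x,d]=d(x)$ and $[d,d']=0$. By construction it is solvable with $(\cal R_{\cal T_{max}})^2 \subseteq \cal N$, and a standard toral argument shows $\cal N$ is its full nilradical: an element $n+t$ with $t \neq 0$ has a nonzero semisimple component on $\cal N$ coming from $t$, so $ad_{n+t}$ cannot be nilpotent. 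Hence $\cal R_{\cal T_{max}}$ is a bona fide solvable extension with $\codim \cal N = \dim \cal T_{max}$, and maximality of $\cal R$ yields $\dim \cal Q_0 \geq \dim \cal T_{max}$.

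The delicate point I expect is the identification of the nilradical of $\cal R_{\cal T_{max}}$ as exactly $\cal N$ rather than something strictly larger, since the maximum codimension must be witnessed by an extension whose nilradical is still $\cal N$. Once that identification is in place, combining the two inequalities gives the proposition, and all remaining work is bookkeeping against Theorem \ref{thm3.17} and the definition of the maximal-rank property of $\cal N$.
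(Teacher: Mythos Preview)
Your proof is correct, and the lower-bound half matches the paper's one-line justification exactly. For the upper bound the paper's formal proof takes a slightly different route: instead of invoking Theorem~\ref{thm3.17}, it uses the standing hypothesis $[\cal R_1,\cal R_1]\subseteq[\cal R_0,\cal R_0]$ (so that Lie's theorem applies) to put every $ad_{z|\cal N}$ in upper-triangular form, extracts the diagonal parts as even superderivations (via Theorem~\ref{thm31}), and then uses Proposition~\ref{prop3.13} to argue that these diagonals are linearly independent, hence span a torus of dimension $\dim\cal Q_0$, forcing $\dim\cal Q_0\le\dim\cal T_{max}$. Your route through Theorem~\ref{thm3.17} is more economical---it needs only condition~$a)$---and is in fact the argument the paper sketches informally in the paragraph immediately preceding the proposition; the paper's formal proof opts instead for the diagonal-derivation machinery because that same machinery is reused immediately afterwards in Remark~\ref{rem3.21} and the proof of Lemma~\ref{lem3.23}.
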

\begin{proof} Clearly, $\dim \cal Q_0 \geq \dim \cal T_{max}$ (otherwise, $\cal R$ is not a maximal extension of $\cal N$).
Let $\{z_1, \dots, z_p\}$ be a basis of $\cal Q_0$. Then diagonals of $ad_{{z_i}|\mathcal{N}}, \ 1\leq i \leq p$ are derivations of $\cal N$. Thanks to Proposition \ref{prop3.13} an operator $ad_{{z}|\mathcal{N}}$ is non nilpotent for any non-zero $z\in \cal Q_0$. Therefore, we get the existence $p$ linear independent diagonal derivations. Therefore, $\dim \cal Q_0\leq \dim \cal T_{max}$.
\end{proof}

\begin{rem}\label{rem3.21} From the above proof we deduce that diagonal elements of $ad_{{z}|\mathcal{N}}$ for the general $z\in \cal Q_0$ is nothing else but roots of action $\cal T_{max}$ on $\cal N$.
\end{rem}

Set
$$\dim \Big(\cal N_{0}/\big(\cal N_{0}^{2}+[\cal N_{1}, \cal N_{1}]\big)\Big)=k, \quad \dim (\cal N_{1}/[\cal N_{1},\cal N_{0}])=s.$$

\

From Theorem \ref{thm3.17} and Proposition \ref{prop3.20} we derive that $\cal Q_0^1$ and $\cal Q_0^2$ admit basis $\{z_1, \dots, z_k\}$ and
$\{z_{k+1}, \dots, z_{k+s}\}$, respectively, such that each $ad_{{z_i}|\mathcal{N}}, \ 1\leq i \leq k+s$ has upper-triangular form. Taking into account that subspaces $\cal N_0$ and $\cal N_1$ are invariant under $ad_{{z_i}|\mathcal{N}}$ we have the following products modulo $\cal N^2$:
\begin{equation}\label{eq1}
\begin{array}{lllll}
[x_i,z_j]& \equiv & \delta_{i,j}x_i+\sum\limits_{p=i+1}^k\mu_{i,j}^px_p,& 1\leq i \leq k, & 1\leq j \leq k+s,\\[1mm]
[y_i,z_j]& \equiv &\delta_{i,j-k}y_i+\sum\limits_{q=i+1}^s\nu_{i,j}^qy_q,& 1\leq i \leq s, &1\leq j \leq k+s,\\[1mm]
\end{array}
\end{equation}
where $\delta_{i,j}$ is Kronecker delta.

\begin{lem} \label{lem3.22} There exists a basis of $\cal R$ such that for any $i,j\in \{1, \dots, s\}$ with $i\neq j$ the following products modulo $\cal N^2$ hold true:
$$\begin{array}{lllllllllll}
i) & [x_i,z_j] \equiv \delta_{i,j}x_i, & 1\leq i \leq k,& 1\leq j \leq k+s,\\[1mm]
ii)& [y_i,z_j] \equiv \delta_{i,j-k}y_i, & 1\leq i \leq s, & 1\leq j \leq k+s,\\[1mm]
iii) &[z_i, z_j] \equiv 0, & 1\leq i, j \leq k+s. &\\[1mm]
\end{array}$$
\end{lem}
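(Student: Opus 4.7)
The plan is to first simultaneously diagonalize the operators $ad_{z_j}$ on the quotient $V := \mathcal{N}/\mathcal{N}^2$ by changing the generator basis of $\mathcal{N}$, and then absorb the residual brackets $[z_i,z_j]$ modulo $\mathcal{N}^2$ by a final shift of each $z_j$ by an element of the generator span. The driving observation is that, because $\mathcal{R}^2\subseteq\mathcal{N}$, the commutator $[ad_{z_i},ad_{z_j}]=ad_{[z_i,z_j]}$ is an inner derivation by an element of $\mathcal{N}$, which sends $\mathcal{N}$ into $\mathcal{N}^2$; hence the induced operators on $V$ pairwise commute.

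For (i), I restrict to the invariant subspace $\bar X := \Span\{\bar x_1,\dots,\bar x_k\}\subset V$. By \eqref{eq1} the matrix $A_j:=ad_{z_j}|_{\bar X}$ is upper triangular with diagonal $(\delta_{1,j},\dots,\delta_{k,j})$; in particular $A_{k+1},\dots,A_{k+s}$ are nilpotent. The combination $T:=\sum_{j=1}^{k} j\,A_j$ has the distinct eigenvalues $1,2,\dots,k$, so its centralizer is $\mathbb{C}[T]$; each $A_j$ therefore is a polynomial in $T$ and diagonalizes in the eigenbasis of $T$. Picking such a basis $\{x'_1,\dots,x'_k\}$ of $X$ and relabelling gives $[x_i,z_j]\equiv\delta_{i,j}x_i\pmod{\mathcal{N}^2}$ for $j\leq k$, while for $j>k$ the matrix $A_j$ becomes simultaneously diagonal and nilpotent, hence vanishes. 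Statement (ii) will be proved by the identical argument applied to $\bar Y$ and the operators $ad_{z_j}|_{\bar Y}$; the subspaces $X$ and $Y$ can be treated independently because $\mathcal{N}_0$ and $\mathcal{N}_1$ are both $ad_{z_j}$-invariant.

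For (iii), assume (i) and (ii) already hold and set $v_l:=x_l$ for $l\leq k$ and $v_l:=y_{l-k}$ for $k<l\leq k+s$, so that $[v_m,z_j]\equiv\delta_{m,j}v_m\pmod{\mathcal{N}^2}$. Let $u_{ij}\in V$ be the image of $[z_i,z_j]$ and write $u_{ij}=\sum_l \gamma_l^{ij}\bar v_l$. Using that $\mathcal{N}^2$ is an ideal of $\mathcal{R}$ (an easy Jacobi check shows $[\mathcal{Q}_0,\mathcal{N}^2]\subseteq\mathcal{N}^2$), the Jacobi identity $[[z_i,z_j],z_l]=[z_i,[z_j,z_l]]-[z_j,[z_i,z_l]]$ reduces modulo $\mathcal{N}^2$ to
\begin{equation*}
\gamma_l^{ij}\,\bar v_l = -\gamma_i^{jl}\,\bar v_i + \gamma_j^{il}\,\bar v_j,
\end{equation*}
forcing $\gamma_l^{ij}=0$ whenever $l\notin\{i,j\}$, so that $u_{ij}=\gamma_i^{ij}\bar v_i+\gamma_j^{ij}\bar v_j$. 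Finally I substitute $\tilde z_j:=z_j-\sum_{m\neq j}\gamma_m^{jm}v_m$: the correction lies in the generator span, and since $[v_m,v_l]\in\mathcal{N}^2$ the properties (i) and (ii) are preserved, while a short computation using the antisymmetry $\gamma^{ji}=-\gamma^{ij}$ gives $[\tilde z_i,\tilde z_j]\equiv 0\pmod{\mathcal{N}^2}$. The main subtlety I expect is verifying that the \emph{single} correction $\sigma_j$ simultaneously annihilates $u_{ij}$ for every partner $i$; the antisymmetry of the $\gamma$'s is precisely what reconciles the two requirements coming from the pairs $(i,j)$ and $(j,i)$.
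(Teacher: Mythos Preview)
Your proof is correct, and for parts (i)--(ii) it takes a genuinely different route from the paper. The paper normalises $[x_i,z_i]\equiv x_i$ by an explicit chain of triangular substitutions $x_i\mapsto x_i+\mu\,x_{i+1}$, $x_i\mapsto x_i+\mu'x_{i+2}$, \dots, and only \emph{afterwards} uses the Jacobi identity on $(x_i,z_i,z_j)$ to force the off-diagonal products $[x_i,z_j]$ with $i\neq j$ to vanish modulo $\mathcal{N}^2$. You instead exploit from the start that $[ad_{z_i},ad_{z_j}]=ad_{[z_i,z_j]}$ acts trivially on $\mathcal{N}/\mathcal{N}^2$, so the induced operators $A_j$ on $\bar X$ commute; the regular combination $T=\sum_j jA_j$ then yields a single eigenbasis in which every $A_j$ is diagonal (and those with $j>k$, being nilpotent, vanish). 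This is cleaner, handles all $j$ simultaneously, and makes transparent why no obstruction arises. For part (iii) the two arguments are essentially the same---Jacobi on $(z_i,z_j,z_l)$ followed by a shift of the $z_j$ by generators---but your unified notation $v_l$ replaces the paper's case split according to whether the indices lie in $\{1,\dots,k\}$ or $\{k+1,\dots,k+s\}$, and your single correction $\tilde z_j=z_j-\sum_{m\neq j}\gamma_m^{jm}v_m$ (with the antisymmetry $\gamma^{ji}_l=-\gamma^{ij}_l$) packages the paper's three separate shifts into one. One point you leave implicit but which is harmless: since $[z_i,z_j]\in\mathcal{N}_0$, the coefficients $\gamma_m^{jm}$ vanish for $m>k$, so $\sigma_j$ is even and the substitution is legitimate.
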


\begin{proof} Part {\it i)}. For the case of $i=j$ consider consistently for $i=1, 2, \dots, k$ the following change of generator basis elements that lie in $\cal N_0$:
$x_i'=x_i+\mu_{i,i}^{i+1}x_{i+1}.$ Then we get $[x_i',z_i] \equiv x_i'+(\mu_{i,i}^{i+2})'x_{i+2}+\sum\limits_{t=i+3}^k*x_t.$

Further, putting $x_i''=x_i'+(\mu_{i,i}^{i+2})'x_{i+2},$ we derive $[x_i'', z_i] \equiv x_i''+\sum\limits_{t={i+3}}^k*x_t.$

Continuing in such way one can assume that $[x_i,z_i] \equiv x_i, \ 1\leq i \leq k.$

Let now $i\neq j$. Since $[\cal Q_0, \cal Q_0]\subseteq \cal N_0$ we conclude that $[x_i,[z_i,z_j]] \equiv 0.$ Therefore, applying the product
$[x_i,z_i]\equiv x_i$ and the first congruence of \eqref{eq1} in
$$[x_i,[z_i,z_j]]=[[x_i,z_i],z_j]-[[x_i,z_j],z_i] \equiv [x_i,z_j]-[[x_i,z_j],z_i]$$
we deduce $[x_i, z_j] \equiv 0$ for $1\leq i\leq k$ and $1 \leq j \leq k+s$, $i\neq j$.

The part {\it ii)} can be obtained in a such way as part {\it i)}.

Part {\it iii)}. Taking onto account part {\it i)} we consider the equality
\begin{equation}\label{eq3.5}
[z_t, [z_i,z_j]]=[[z_t, z_i],z_j] - [[z_t, z_j],z_i].
\end{equation}
If indexes $t, i, j$ in \eqref{eq3.5} are pairwise non equal from the set $\{1, \dots, k\}$, then we derive $$[z_i,z_j] \equiv \theta_{i,j}^ix_i+\theta_{i,j}^jx_j.$$
Now, setting $z_i'=z_i-\sum\limits_{p=1, p\neq i}^{k}\theta_{i,p}^px_p,\ 1\leq i\leq k,$ we obtain $[z_i',z_j'] \equiv 0$ with $1\leq i, j \leq k$.

If indexes $i, j$ in \eqref{eq3.5} are pairwise non equal from the set $\{k+1, \dots, k+s\}$ and $t\in\{1, \dots, k\}$, then we deduce $[z_{i},z_{j}] \equiv 0, \ k+1\leq i, j \leq k+s$.

If indexes $t, i$ in \eqref{eq3.5} are pairwise non equal from the set $\{1, \dots, k\}$ and $j\in\{k+1, \dots, k+s\}$, then we get
$[z_{i},z_{j}] \equiv \theta_{i,j}^ix_i.$ Setting $z_{j}'=z_{j}-\sum\limits_{p=1}^{k}\theta_{j,i}^px_p,\ 1\leq i\leq k,$
we obtain $[z_i,z_{j}'] \equiv 0$ with $1\leq i \leq k$ and $k+1\leq j \leq k+s$.
\end{proof}

Let $\tau+1$ is the nilindex of $\cal N$.

\begin{lem}\label{lem3.23} The following congruences modulo $\cal N^t$ with $2\leq t \leq \tau+1$ are true:
$$\begin{array}{llll}
[x_i,z_j]\equiv \delta_{i,j}x_i, & 1\leq i \leq k, & 1\leq j \leq k+s,\\[1mm]
[x_i,z_j]\equiv \alpha_{i,j}x_i, & k+1\leq i \leq n, & 1\leq j \leq k+s,\\[1mm]
[y_i,z_j]\equiv \delta_{i,j}y_i, & 1\leq i \leq s, & 1\leq j \leq k+s,\\[1mm]
[y_i,z_j]\equiv \beta_{i,j}y_i, & s+1\leq i\leq m, & 1\leq j\leq k+s,\\[1mm]
[z_i, z_j]\equiv 0, & 1\leq i, j \leq k+s.\\[1mm]
\end{array}$$
\end{lem}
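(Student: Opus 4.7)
The plan is to prove Lemma \ref{lem3.23} by induction on $t$. The base case $t=2$ follows immediately from Lemma \ref{lem3.22}: the generator rows and the $z$-$z$ row are recorded there, while for non-generator basis elements ($i>k$ for $x_i$, $i>s$ for $y_i$) both sides automatically lie in $\mathcal{N}^2$, so the congruence $[x_i,z_j]\equiv\alpha_{i,j}x_i\pmod{\mathcal{N}^2}$ is the trivial identity $0\equiv 0$ (the scalars $\alpha_{i,j},\beta_{i,j}$ remaining undetermined at this stage).

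For the inductive step, assume the congruences hold modulo $\mathcal{N}^t$. The non-generator rows I would dispatch directly via super Leibniz. In the natural basis, each non-generator $x_i$ can be written as a right-normed bracket $x_i=[g,w]$ of a generator $g$ with $w\in\mathcal{N}$, and because $z_j$ is even, $ad_{z_j}$ is an ordinary derivation of $\mathcal{N}$, giving
\begin{equation*}
[[g,w],z_j]=[[g,z_j],w]+[g,[w,z_j]].
\end{equation*}
By the inductive hypothesis $[g,z_j]=\lambda_g g+u$ and $[w,z_j]=\lambda_w w+v$ with $u,v\in\mathcal{N}^t$, and since $g,w\in\mathcal{N}$ the error terms $[u,w]$ and $[g,v]$ fall into $\mathcal{N}^{t+1}$. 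Hence $[x_i,z_j]\equiv(\lambda_g+\lambda_w)x_i\pmod{\mathcal{N}^{t+1}}$, which fixes $\alpha_{i,j}:=\lambda_g+\lambda_w$ (and $\beta_{i,j}$ similarly on the odd side). Inductively $\alpha_{i,j}$ emerges as a sum of the $\delta$-eigenvalues of the generators entering the right-normed expression of $x_i$.

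The generator rows are the main obstacle and require a basis refinement. By the inductive hypothesis $[x_i,z_j]=\delta_{i,j}x_i+u_{i,j}$ with $u_{i,j}\in\mathcal{N}^t$, and the task is to absorb $u_{i,j}$ modulo $\mathcal{N}^{t+1}$ by perturbing $z_j$. Applying Theorem \ref{thm31} to each $ad_{z_j|\mathcal{N}}$ yields a Jordan decomposition $d_0^{(j)}+d_1^{(j)}$ into commuting semisimple and nilpotent even superderivations; the family $\{d_0^{(j)}\}$ commutes, consists of diagonalizable derivations, and by the maximal-rank hypothesis together with Corollary \ref{corequaldim} spans $\mathcal{T}_{max}$. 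By the conjugacy of maximal tori (Theorem \ref{thmMostow}), we may arrange that the $d_0^{(j)}$ act diagonally in the natural basis with the required eigenvalues $\delta,\alpha,\beta$, and then the residual nilpotent parts $d_1^{(j)}$ must be realized as inner derivations $ad_{n_j}$ for some $n_j\in\mathcal{N}$. Replacing $z_j$ by $z_j-n_j$ keeps it in a complementary subspace to $\mathcal{N}$ and diagonalizes $ad_{z_j}$. Implementing this recursively and truncating $n_j$ at filtration level $\mathcal{N}^{t-1}$ at the $t$-th step ensures that the induced correction $[\mathcal{N}^{t-1},\mathcal{N}]\subseteq\mathcal{N}^t$ does not disturb the congruences already obtained modulo $\mathcal{N}^s$ for $s\le t$, while it exactly cancels $u_{i,j}$ at the current level. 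Finally, $[z_i,z_j]\in\mathcal{N}$ becomes a common weight-zero element of the diagonalized torus action on $\mathcal{N}/\mathcal{N}^{t+1}$; the maximal-rank assumption forbids a nonzero weight-zero basis vector in $\mathcal{N}$, so $[z_i,z_j]\equiv 0\pmod{\mathcal{N}^{t+1}}$.

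The hard part throughout is to guarantee that the nilpotent component $d_1^{(j)}$ is inner and that the iterative basis refinements at deepening filtration levels remain mutually compatible. Both rely on the maximal-rank hypothesis together with Theorem \ref{thmMostow}, while the compatibility across levels is secured by the ideal property $[\mathcal{N}^{t-1},\mathcal{N}]\subseteq\mathcal{N}^t$, which localizes each correction to the relevant graded stratum.
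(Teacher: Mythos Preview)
Your inductive scaffold and the handling of non-generator rows via super Leibniz match the paper. The gap is at the generator step: you assert that ``the residual nilpotent parts $d_1^{(j)}$ must be realized as inner derivations $ad_{n_j}$'', but this is precisely the content of the lemma and does not follow from Jordan decomposition plus torus conjugacy. The relation $[d_0^{(j)},d_1^{(j)}]=0$ only forces $d_1^{(j)}$ to preserve the eigenspaces of the \emph{single} element $d_0^{(j)}$, not of the whole torus, and that is far from innerness. For instance, on the free $2$-step nilpotent Lie algebra on generators $x_1,x_2,x_3$ (which has maximal rank), the map $x_2\mapsto x_3$, $[x_1,x_2]\mapsto[x_1,x_3]$, rest $\mapsto 0$, is a nilpotent derivation commuting with $d_0^{(1)}=\diag(1,0,0,1,1,0)$ yet is not inner. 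What actually rules such a $d_1^{(1)}$ out in your setting is the full \emph{system} of constraints $[ad_{z_i},ad_{z_j}]\in Inder(\mathcal{N})$ for all pairs $(i,j)$, and extracting the conclusion from that system is exactly the work the paper performs level by level, using basis changes in $\mathcal{N}$ itself (not only in $\mathcal{Q}_0$). The decisive external ingredient there is that primitive root spaces are one-dimensional (Lemma~2.5 of \cite{Wang2003}): this guarantees, for each generator $x_i$ and each non-generator $x_l$, some index $t$ with $\alpha_{l,t}\neq\delta_{i,t}$, which is what makes the off-diagonal coefficient $\lambda_{i,j}^l$ removable by a shift $x_i\mapsto x_i-c\,x_l$.

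The weight-zero argument for $[z_i,z_j]$ has the same circularity. Even after all $ad_{z_l}|_{\mathcal{N}}$ are diagonal, Jacobi gives $[[z_i,z_j],z_l]=[[z_i,z_l],z_j]+[z_i,[z_j,z_l]]$, and the right-hand side involves the as-yet-unknown brackets $[z_i,z_l]$, $[z_j,z_l]$; nothing forces it to vanish a priori. The paper handles this by a separate nested induction with shifts $z_i\mapsto z_i-\sum_p c_p x_p$, again exploiting that every basis weight is nonzero on at least one $z_l$.
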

\begin{proof} We shall prove the statement of lemma by induction on $t$. Due to Lemma \ref{lem3.22} we have the base of induction.
Let assume that the lemma is true for $m$ and we shall prove it for $t+1$. We denote by $\{x_{l}, \dots, x_{l_{t}}\}$ and $\{y_{u}, \dots, y_{u_{t}}\}$ the basis elements of $\cal N$ which lie in $\cal N^{t+1} \setminus \cal N^{t+2}$. Then we have products modulo $\cal N^{t+1}$:
$$\begin{array}{llll}
[x_i,z_j]\equiv \delta_{i,j}x_i+\sum\limits_{p=l}^{l_{m}}\lambda_{i,j}^px_p, & 1\leq i \leq k, & 1\leq j \leq k+s,\\[1mm]
[x_i,z_j]\equiv \alpha_{i,j}x_i+\sum\limits_{p=l}^{l_{m}}\lambda_{i,j}^px_p, & k+1\leq i \leq n, & 1\leq j \leq k+s,\\[1mm]
[y_i,z_j]\equiv \delta_{i,j}y_i+\sum\limits_{p=u}^{u_{m}}\mu_{i,j}^py_p, & 1\leq i \leq s, & 1\leq j \leq k+s,\\[1mm]
[y_i,z_j]\equiv \beta_{i,j}y_i+\sum\limits_{p=u}^{u_{m}}\mu_{i,j}^py_p, & s+1\leq i\leq m, & 1\leq j\leq k+s,\\[1mm]
[z_i, z_j]\equiv \sum\limits_{p=l}^{l_{m}}\theta_{i,j}^px_p, & 1\leq i, j \leq k+s.\\[1mm]
\end{array}$$

From congruences with $1\leq i \leq k$ and $1\leq j, t \leq k+s$:
$$0\equiv[x_i,[z_j,z_t]]=[[x_i,z_j],z_t]-[[x_i,z_t],z_j]\equiv
[\delta_{i,j}x_i+\sum\limits_{p=l}^{l_{m}}\lambda_{i,j}^px_p,z_t]-
[\delta_{i,t}x_i+\sum\limits_{p=l}^{l_{m}}\lambda_{i,t}^px_p,z_j]\equiv$$
$$\delta_{i,j}[x_i,z_t]+\sum\limits_{p=l}^{l_{m}}\lambda_{i,j}^p[x_p,z_t]
-\delta_{i,t}[x_i,z_j]-\sum\limits_{p=l}^{l_{m}}\lambda_{i,t}^p[x_p,z_j]\equiv$$
$$\delta_{i,j}\Big(\delta_{i,t}x_{i}+\lambda_{i,t}^{l}x_{l}\Big)
+\lambda_{i,j}^{l}\alpha_{l,t}x_l-\delta_{i,t}\Big(\delta_{i,j}x_i+\lambda_{i,j}^lx_l\Big)
-\lambda_{i,t}^l\alpha_{l,j}x_l+\sum\limits_{p=l+1}^{l_{m}}(*)x_p\equiv$$
$$\Big(\delta_{i,j}\lambda_{i,t}^l+\lambda_{i,j}^l\alpha_{l,t}-\delta_{i,t}\lambda_{i,j}^l
-\lambda_{i,t}^l\alpha_{l,j}\Big) x_l+\sum\limits_{p=l+1}^{l_{m}}(*)x_p,$$
we deduce $\lambda_{i,j}^l(\alpha_{l,t}-\delta_{i,t})=\lambda_{i,t}^l(\alpha_{l,j}-\delta_{i,j}).$

Taking into account Remark \ref{rem3.21} and the result that roots subspaces in case of primitive roots are one-dimensional (see Lemma 2.5 in \cite{Wang2003}), we conclude that for any $l\neq i$ there exists $t$ such that $\alpha_{l,t}\neq\delta_{i,t}.$

If $\alpha_{l,j}-\delta_{i,j}=0,$ then $\lambda_{i,j}^l=0$.

If $\alpha_{l,j}-\delta_{i,j}\neq0,$ then we have $\lambda_{i,j}^l=\frac{\lambda_{i,t}^l(\alpha_{l,j}-\delta_{i,j})}{\alpha_{l,t}-\delta_{i,t}}.$

Now taking $x_i'=x_i - \frac{\lambda_{i,j}^l}{\alpha_{l,j}-\delta_{i,j}}x_l,$ we derive

$$[x_i', z_j]\equiv
\delta_{i,j}x_i+\lambda_{i,j}^lx_l-
\frac{\lambda_{i,j}^l}{\alpha_{l,j}-\delta_{i,j}}
\alpha_{l,j}x_l+\sum\limits_{p=l+1}^{l_{m}}(*)x_p\equiv\delta_{i,j}x_i'+\sum\limits_{p=l+1}^{l_{m}}(*)x_p.$$

Applying sequentially the same arguments as in above ($l_m-l$)-times we derive
$$[x_i, z_j]=\delta_{i,j}x_i, \quad 1\leq i \leq k, \quad 1\leq j \leq k+s \quad (mod \ \cal N^{m+1}).$$

Similarly, we obtain
$$[y_i, z_j]=\delta_{i,j-k}y_i, \quad 1\leq i \leq s, \quad 1\leq j \leq k+s \quad (mod \ \cal N^{m+1}).$$

Applying in Lie superidentity the fact that a basis is natural, one can conclude that
$$[x_i,z_j]\equiv \alpha_{i,j}x_i, \quad k+1\leq i \leq n, \quad [y_t,z_j]\equiv \beta_{t,j}y_t, \quad  s+1\leq t\leq m, \quad 1\leq j\leq k+s,$$
where
\begin{itemize}
  \item $\alpha_{i,j}$ is the number of entries of a generator basis element $x_j$ (respectively, $y_{j-k}$) involved in forming non generator basis element $x_i$ for $1\leq j \leq k$ (respectively, $k+1\leq j \leq k+s$);
  \item $\beta_{i,j}$ is the number of entries of a generator basis element $x_j$ (respectively, $y_{j-k}$) involved in forming non generator basis element $y_i$ for $1\leq j \leq k$ (respectively, $k+1\leq j \leq k+s$).
\end{itemize}

Now taking the following change of basis elements
$$z_1'=z_1-\sum\limits_{p=l, \alpha_{p,2}\neq 0}^{l_m}\frac{\delta_{\alpha_{p,1}, 0}\theta_{1,2}^p}{\alpha_{p,2}}x_p, \quad
z_i'=z_i + \sum\limits_{p=l, \alpha_{p,1}\neq 0}^{l_m}\frac{\theta_{1,i}^p}{\alpha_{p,1}}x_p, \quad 2\leq i \leq k+s,
$$
one can assume

\begin{equation}\label{eq361}
[z_1,z_2]\equiv\sum\limits_{p=l}^{l_{m}}\delta_{\alpha_{p,1}, 0}\delta_{\alpha_{p,2},0}\theta_{1,2}^px_p, \quad
[z_1,z_i]\equiv \sum\limits_{p=l}^{l_{m}}\delta_{\alpha_{p,1}, 0}\theta_{1,i}^px_p, \quad 3\leq i \leq k+s.
\end{equation}

From Lie superidentity with $3\leq i \leq k+s$
$$[z_1, [z_2,z_i]]=[[z_1, z_2],z_i] - [[z_1, z_i],z_2],$$
we deduce $[z_1, [z_2,z_i]]\equiv [z_1, \sum\limits_{p=l}^{l_{m}}\theta_{2,i}^px_p] \equiv
\sum\limits_{p=l}^{l_{m}}\theta_{2,i}^p[z_1,x_p]\equiv
-\sum\limits_{p=l}^{l_{m}}\theta_{2,i}^p\alpha_{p,1}x_p.$

On the other hand, we have
$$[[z_1, z_2],z_i] - [[z_1, z_i],z_2]=[\sum\limits_{p=l}^{l_{m}}\delta_{\alpha_{p,1},0}\delta_{\alpha_{p,2},0}\theta_{1,2}^px_p, z_i]-
[\sum\limits_{p=l}^{l_{m}}\delta_{\alpha_{p,1},0}\theta_{1,i}^px_p, z_2]\equiv$$
$$\sum\limits_{p=l}^{l_{m}}\delta_{\alpha_{p,1},0}\delta_{\alpha_{p,2},0}\theta_{1,2}^p[x_p, z_i]-
\sum\limits_{p=l}^{l_{m}}\delta_{\alpha_{p,1},0}\theta_{1,i}^p[x_p, z_2]\equiv
\sum\limits_{p=l}^{l_{m}}\delta_{\alpha_{p,1},0}\Big(\delta_{\alpha_{p,2},0}\theta_{1,2}^p\alpha_{p,i}-\theta_{1,i}^p\alpha_{p,2}\Big)x_p.$$

Let $\alpha_{p,1}=0$ (otherwise we get $[z_1, z_i]=0$ for any $1\leq i \leq k+s$). Then comparing coefficient at the basis elements for any  $l \leq p \leq l_m$ and $3 \leq i\leq k+s$
we derive

\begin{equation}\label{eq37}
\left\{\begin{array}{llllll}
\theta_{1,2}^p\alpha_{p,i}&=&0, &\mbox{if} \quad \alpha_{p,2}=0,\\[1mm]
\theta_{1,i}^p&=&0, & \mbox{if} \quad \alpha_{p,2}\neq 0.\\[1mm]
\end{array}\right.
\end{equation}

If $\alpha_{p,2}=0$, then there exists $j$ such that $\alpha_{p,j}\neq 0$, which imply $[z_1,z_2]=0.$

If
$\alpha_{p,2}\neq 0$, then the product $[z_1,z_2]=0$ follows from \eqref{eq361}. Thus, \eqref{eq37} implies
\begin{equation}\label{eq38}
[z_1, z_2]\equiv0, \quad [z_1, z_i]\equiv\sum\limits_{p=l}^{l_m}\delta_{\alpha_{p,1}, 0}\delta_{\alpha_{p,2}, 0}\theta_{1,i}^px_p, \quad 3\leq i \leq k+s.
\end{equation}

The following congruences for any $r$ ($2 \leq r \leq k+s$):
\begin{equation} \label{eq39}
[z_1,z_i]\equiv 0 , \quad 2\leq i \leq r, \quad [z_1,z_i]\equiv \sum\limits_{p=l}^{l_{m}}\prod\limits_{q=1}^{r}\delta_{\alpha_{p,q},0}\theta_{1,j}^px_p, \quad r+1\leq i \leq k+s,
\end{equation}
we shall prove by induction on $r$.

Thanks to \eqref{eq38} we have the base of induction. Assuming  that congruences \eqref{eq39} are true for $r$ and we shall prove it for $r+1$.

Taking the change $z_{1}'=z_{1}-\sum\limits_{p=l, \alpha_{p,r+1}\neq 0}^{l_m}
\frac{\prod\limits_{q=1}^{r}\delta_{\alpha_{p,q},0}\theta_{1,r+1}^p}{\alpha_{p,r+1}}x_p,$
we can assume
$$[z_1,z_i]\equiv 0 , \quad 1\leq i \leq r, \quad [z_{1},z_{r+1}]\equiv
\sum\limits_{p=l}^{l_{m}}\prod\limits_{q=1}^{r+1}\delta_{\alpha_{p,q},0}\theta_{1, {r+1}}^px_p.$$

Consider now Lie superidentity with $r+2 \leq i \leq k+s$
$$[z_{1}, [z_{r+1},z_i]]=[[z_{1}, z_{r+1}],z_i] - [[z_{1}, z_i],z_{r+1}].$$
Then
$$[z_{1}, [z_{r+1},z_i]]\equiv [z_{1}, \sum\limits_{p=l}^{l_{m}}\theta_{r+1,i}^px_p] \equiv
\sum\limits_{p=l}^{l_{m}}\theta_{r+1, i}^p[z_1,x_p]
\equiv-\sum\limits_{p=l}^{l_{m}}\theta_{r+1,i}^px_p\alpha_{p,1}x_p.$$

On the other hand, we have
$$[[z_{1}, z_{r+1}],z_i] - [[z_{1}, z_i],z_{r+1}]\equiv
\sum\limits_{p=l}^{l_{m}}\prod\limits_{q=1}^{r+1}\delta_{\alpha_{p,q},0}
\theta_{1,{r+1}}^p[x_p, z_i]-
\sum\limits_{p=l}^{l_{m}}\prod\limits_{q=1}^{r}\delta_{\alpha_{p,q},0}\theta_{1,i}^p[x_p, z_{r+1}]\equiv$$
$$\sum\limits_{p=l}^{l_{m}}\prod\limits_{q=1}^{r}\delta_{\alpha_{p,q},0}\Big(\delta_{\alpha_{p,r+1},0}\theta_{1,{r+1}}^p\alpha_{p,i}-
\theta_{1,i}^p\alpha_{p,{r+1}}\Big)x_p.$$

Induction assumption allows us to consider only the case $\alpha_{p,q}=0$ for any $1\leq q \leq r$. Then comparing coefficient at the basis elements we derive
$$\left\{\begin{array}{llllll}
\theta_{1,r+1}^p\alpha_{p,i}&=&0, &\mbox{if} \quad \alpha_{p,r+1}=0,\\[1mm]
\theta_{1,i}^p&=&0, & \mbox{if} \quad \alpha_{p,r+1}\neq 0,\\[1mm]
\end{array}\right.$$
with $l \leq p \leq l_m$ and $r+2\leq i\leq k+s.$

If $\alpha_{p,r+1}=0$, then there exists $i$ such that $\alpha_{p,i}\neq 0$, which imply $[z_1,z_{r+1}]=0$.
Thus, we obtain
$$[z_1,z_i]\equiv 0 , \quad 1\leq i \leq r+1, \quad
[z_1,z_i]\equiv \sum\limits_{p=l}^{l_{m}}\prod\limits_{q=1}^{r+1}\delta_{\alpha_{p,q},0}\theta_{1,i}^px_p, \quad r+2\leq i \leq k+s.$$

Thus, putting $r=k+s$ in \eqref{eq39} we obtain $[z_1,z_i]=0$ for $1\leq i \leq k+s$.

For $2\leq i<j\leq s$ we have $[z_1,[z_i,z_j]]=[[z_1,z_i],z_j]-[[z_1,z_j],z_i]\equiv0.$
On the other hand,
$$[z_1,[z_i,z_j]]\equiv[z_1,\sum\limits_{p=l}^{l_m}\theta_{i,j}^px_p]\equiv-\sum\limits_{p=l}^{l_m}\alpha_{p,1}\theta_{i,j}^px_p.$$
Consequently, $\alpha_{p,1}\theta_{i,j}^p=0,\ l\leq p\leq l_m,\ 2\leq i<j\leq k+s.$
Hence,
\begin{equation}\label{eq310}
[z_1,z_i]\equiv0, \quad 2\leq i\leq k+s,\quad [z_i,z_j]\equiv\sum\limits_{p=l}^{l_m}\delta_{\alpha_{p,1},0}\theta_{i,j}^px_p, \quad
2\leq i< j \leq s.
\end{equation}

Applying induction methods with similar arguments used above and the base of induction congruences \eqref{eq310} one can prove
\begin{equation}\label{eq311}
\left\{\begin{array}{lll}
[z_i,z_j]\equiv0 & 1\leq i\leq r, \ 1\leq j\leq k+s,\\[1mm]
[z_i,z_j]\equiv\sum\limits_{p=l}^{l_m}\prod_{q=1}^{r}\delta_{\alpha_{p,q},0}\theta_{i,j}^px_p, & r+1\leq i \neq j \leq k+s.\\[1mm]
\end{array}\right.
\end{equation}

Setting in congruences \eqref{eq311} $r=k+s$ we obtain $[z_i,z_j]\equiv 0$ for any $1\leq i, j \leq k+s$.
\end{proof}

For a nilpotent Lie superalgebra $\mathcal{N}=\mathcal{N}_0\oplus \mathcal{N}_1$ with a basis $\{x_1, x_2, \dots, x_n, y_1, y_2, \dots, y_m\}$ we set
$$\left\{\begin{array}{lll}
[x_i,x_j]=\sum\limits_{p=1}^{n}\gamma_{i,j}^px_p,& 1\leq i,j\leq n\\[1mm]
[y_i,y_j]=\sum\limits_{p=1}^{n}\delta_{i,j}^px_p,& 1\leq i,j\leq m\\[1mm]
[x_i,y_j]=\sum\limits_{p=1}^{m}\eta_{i,j}^py_p,& 1\leq i \leq n, 1\leq j\leq m\\[1mm]
\end{array}\right.$$
 where $\gamma_{i,j}^t, \delta_{i,j}^t, \eta_{i,j}^t\in \mathbb{C}$ are the structure constants.

Setting in Lemma \ref{lem3.23} $t=\tau+1$ we obtain the main result of this section.
\begin{thm} \label{thm3.24} Let $\cal R=\cal N\oplus \cal Q$ be a maximal solvable extension of an $(n+m)$-dimensional nilpotent Lie superalgebra $\cal N$ of the maximal rank such that $\cal R$ satisfy the conditions:
\

a) $\cal R^2 \subseteq \cal N$;

b) $[\cal R_1, \cal R_1] \subseteq [\cal R_0, \cal R_0]$.\\
Then it admits bases $\{x_1, \dots, x_k, x_{k+1}, \dots, x_{n}\}$ of $\cal N_0$,
$\{y_1, \dots, y_s, y_{s+1}, \dots, y_{m}\}$ of $\cal N_1$, $\{z_1, \dots, z_k, z_{k+1}, \dots, z_{k+s}\}$ of $\cal Q$
with only non-zero additional products to $[\cal N, \cal N]$:

$$\begin{array}{llll}
[x_i,z_j]=\delta_{i,j}x_i, & 1\leq i \leq k, & 1\leq j \leq k+s,\\[1mm]
[x_i,z_j]=\alpha_{i,j}x_i, & k+1\leq i \leq n, & 1\leq j \leq k+s,\\[1mm]
[y_i,z_j]=\delta_{i,j-k}y_i, & 1\leq i \leq s, & 1\leq j \leq k+s,\\[1mm]
[y_i,z_j]=\beta_{i,j}y_i, & s+1\leq i\leq m, & 1\leq j\leq k+s,\\[1mm]
[z_i, z_j]=0, & 1\leq i, j \leq k+s.\\[1mm]
\end{array}$$
\end{thm}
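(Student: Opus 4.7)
The plan is to obtain Theorem 3.24 as an essentially immediate consequence of Lemma 3.23, specialised at the top of the nilpotent filtration. Concretely, since $\tau+1$ is the nilindex of $\mathcal{N}$, we have $\mathcal{N}^{\tau+1}=\{0\}$, so every congruence $\equiv\pmod{\mathcal{N}^{\tau+1}}$ automatically becomes a genuine equality in $\mathcal{R}$. Setting $t=\tau+1$ in Lemma 3.23 therefore converts the five families of congruences there into the five families of exact products claimed in the theorem.

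Before invoking Lemma 3.23, I would first collect the structural inputs that make it applicable. By Proposition 3.12 the complement decomposes as $\mathcal{Q}=\mathcal{Q}_0$ (no odd complement), and by Theorem 3.17 together with Proposition 3.20 the maximality assumption forces $\dim\mathcal{Q}_0^1 = \dim(\mathcal{N}_0/(\mathcal{N}_0^2+[\mathcal{N}_1,\mathcal{N}_1])) = k$ and $\dim\mathcal{Q}_0^2=\dim(\mathcal{N}_1/[\mathcal{N}_1,\mathcal{N}_0])=s$, producing the splitting and the indexing $\{z_1,\dots,z_k\}\cup\{z_{k+1},\dots,z_{k+s}\}$ that appears in the statement. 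Condition (b) permits the application of the superalgebra analogue of Lie's theorem, giving simultaneously upper-triangular matrices for the operators $ad_{z_j|\mathcal{N}}$ in a natural basis of $\mathcal{N}$, which is precisely the working basis in which Lemma 3.23 is proved. Condition (a) ensures that $[\mathcal{Q}_0,\mathcal{Q}_0]\subseteq\mathcal{N}$, which is used implicitly in the Jacobi arguments of that lemma.

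Next I would read off the coefficients. The numbers $\alpha_{i,j}$ (respectively $\beta_{i,j}$) for non-generator indices $i$ appear in Lemma 3.23 as the weights of the generalised eigenvector decomposition of the commuting family of diagonalisable parts of $ad_{z_j|\mathcal{N}}$; by Remark 3.21 these weights are precisely the roots of a maximal torus of $\mathcal{N}$ on $\mathcal{N}$, and because $\mathcal{N}$ is written in the natural basis any non-generator $x_i$ (respectively $y_i$) is a right-normed monomial in the generators and so has weight equal to the sum, with multiplicity, of the generator weights it contains. This justifies the combinatorial description of $\alpha_{i,j}$ and $\beta_{i,j}$ as counts of generator entries and confirms that the products take the stated diagonal form.

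The only delicate point is the reconciliation of the iterated basis changes. Throughout Lemma 3.23 one repeatedly replaces $x_i$, $y_i$ and $z_i$ by elements differing by terms lying deeper in the filtration, and one must verify that such modifications do not spoil the already-established products on lower-filtration layers. I expect this to be the main obstacle in a fully rigorous write-up: one has to argue that at each inductive step $t\to t+1$ the correction vectors lie in $\mathcal{N}^t$, so products modulo $\mathcal{N}^t$ are preserved, and that the finite sequence of adjustments terminates at $t=\tau+1$. Once this compatibility is observed, the terminal step $t=\tau+1$ yields the desired exact equalities, and since all products not listed must lie inside $[\mathcal{N},\mathcal{N}]$ (being determined by the nilradical structure), the theorem follows.
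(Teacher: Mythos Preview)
Your proposal is correct and follows exactly the paper's approach: the paper's proof is the single line ``Setting in Lemma~3.23 $t=\tau+1$ we obtain the main result of this section,'' which is precisely your first paragraph. Your additional discussion of the structural inputs (Propositions~3.12, 3.20, Theorem~3.17, Remark~3.21, and the roles of conditions (a) and (b)) and of the compatibility of the iterated basis changes is more detailed than what the paper records, but it accurately reflects the scaffolding built in the lead-up to Lemma~3.23.
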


It should be noted that the solvable Lie superalgebras in Example \ref{exam} and Remark \ref{rem3.16} show that conditions $a)$ and $b)$ are essential.

Remark that Theorem \ref{thm3.24} generalises the main result of \cite{Khal}.

Finally, Propositions \ref{prop3.12}, \ref{prop3.20} and Theorem \ref{thm3.24} lead to the following result.

\begin{thm} \label{thm3.25} Solvable Lie superalgebra is of maximal rank if and only if it is maximal solvable extension of a nilpotent Lie superalgebra of maximal rank.
\end{thm}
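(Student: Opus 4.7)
The plan is to run both directions through a single chain of inequalities,
\begin{equation*}
\dim\mathcal{Q}_0 \le \dim\mathcal{T}_{max} \le \dim(\mathcal{N}/\mathcal{N}^2),
\end{equation*}
valid for every solvable $\mathcal{R}=\mathcal{N}\oplus\mathcal{Q}_0$ satisfying the two standing conditions $\mathcal{R}^2\subseteq\mathcal{N}$ and $[\mathcal{R}_1,\mathcal{R}_1]\subseteq[\mathcal{R}_0,\mathcal{R}_0]$. First, Proposition~\ref{prop3.12} kills $\mathcal{Q}_1$, so $\codim\mathcal{N}=\dim\mathcal{Q}_0$ throughout. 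The left-hand inequality is exactly the argument carried out in the second half of the proof of Proposition~\ref{prop3.20}: simultaneously upper-triangularise the operators $ad_{z_i|\mathcal{N}}$ for a basis $\{z_1,\dots,z_p\}$ of $\mathcal{Q}_0$ (condition~(b) makes Lie's theorem applicable), apply Theorem~\ref{thm31} to take their diagonal parts, and use Proposition~\ref{prop3.13} to guarantee that these diagonals are non-zero; they are automatically commuting diagonalisable superderivations and thus furnish $p$ linearly independent elements of a torus on $\mathcal{N}$. The right-hand inequality is the standard upper bound on tori on a nilpotent algebra, implicit in the very name ``maximal rank''.

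For the direction $(\Leftarrow)$, I will assume $\mathcal{R}$ is a maximal solvable extension of a nilpotent Lie superalgebra $\mathcal{N}$ of maximal rank. Proposition~\ref{prop3.20} then gives $\dim\mathcal{Q}_0=\dim\mathcal{T}_{max}$, and the maximal-rank hypothesis on $\mathcal{N}$ adds $\dim\mathcal{T}_{max}=\dim(\mathcal{N}/\mathcal{N}^2)$. Composing, $\codim\mathcal{N}=\dim(\mathcal{N}/\mathcal{N}^2)$, which is the defining condition for $\mathcal{R}$ to be of maximal rank.

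For the direction $(\Rightarrow)$, I will assume $\mathcal{R}$ is of maximal rank, i.e.\ $\codim\mathcal{N}=\dim(\mathcal{N}/\mathcal{N}^2)$. Substituted into the chain of inequalities above, this collapses every estimate to an equality. The right-hand equality $\dim\mathcal{T}_{max}=\dim(\mathcal{N}/\mathcal{N}^2)$ is precisely the statement that $\mathcal{N}$ is of maximal rank. The left-hand equality $\dim\mathcal{Q}_0=\dim\mathcal{T}_{max}$, combined with the fact that the model algebra $\mathcal{R}_{\mathcal{T}_{max}}$ of Example~\ref{exampletoralsolvable} is a solvable extension of $\mathcal{N}$ of codimension exactly $\dim\mathcal{T}_{max}$, shows that no solvable extension of $\mathcal{N}$ can have larger codimension than $\mathcal{R}$; hence $\mathcal{R}$ is itself a maximal solvable extension of $\mathcal{N}$.

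The main obstacle is conceptual rather than computational: one must notice that the proof of Proposition~\ref{prop3.20} hides two logically independent inequalities, and that only the lower bound $\dim\mathcal{Q}_0\ge\dim\mathcal{T}_{max}$ uses the maximality hypothesis (it compares $\mathcal{R}$ against the witness $\mathcal{R}_{\mathcal{T}_{max}}$), whereas the upper bound $\dim\mathcal{Q}_0\le\dim\mathcal{T}_{max}$ is general and is exactly what drives the $(\Rightarrow)$ direction, where maximality of the extension is the conclusion rather than the hypothesis. Once these two halves are cleanly separated, the theorem reduces to dimension bookkeeping.
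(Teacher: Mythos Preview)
Your dimension-counting is sound and you correctly separate the two inequalities inside Proposition~\ref{prop3.20}. The issue is definitional, and it affects the $(\Leftarrow)$ direction.

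The paper's definition (just before Theorem~\ref{maxrankalgebra}) declares a solvable Lie superalgebra to be \emph{of maximal rank} only when it has the specific form $\mathcal{R}_{\mathcal{T}_{max}}=\mathcal{N}\dot{+}\mathcal{T}_{max}$ from Example~\ref{exampletoralsolvable} with $\dim\mathcal{T}_{max}=\dim(\mathcal{N}/\mathcal{N}^2)$; the codimension characterisation $\codim\mathcal{N}=\dim(\mathcal{N}/\mathcal{N}^2)$ is mentioned only as a known equivalent \emph{in the Lie-algebra case}. Under the paper's definition, your $(\Leftarrow)$ stops one step short: after establishing $\codim\mathcal{N}=\dim(\mathcal{N}/\mathcal{N}^2)$ you still owe the structural identification $\mathcal{R}\cong\mathcal{R}_{\mathcal{T}_{max}}$ --- that a basis of $\mathcal{Q}_0$ can be chosen so that the $ad_{z_i|\mathcal{N}}$ are simultaneously diagonal and $[z_i,z_j]=0$. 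That is precisely the content of Theorem~\ref{thm3.24}, and indeed the paper's one-line proof of Theorem~\ref{thm3.25} explicitly invokes it alongside Propositions~\ref{prop3.12} and~\ref{prop3.20}. The abstract's phrasing (``an arbitrary solvable Lie superalgebra of maximal rank is isomorphic to the maximal solvable extension of nilradical of maximal rank'') confirms that this isomorphism, not merely the dimension equality, is the intended conclusion.

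If one instead adopts your reading (``of maximal rank'' $:=$ the codimension condition), your argument is complete and in fact more economical than the paper's, since it bypasses the basis normalisations of Lemmas~\ref{lem3.22}--\ref{lem3.23}; what you then prove is the equivalence of the codimension condition with being a maximal extension of a maximal-rank nilradical, which is a correct and useful statement but weaker than what the paper packages into Theorem~\ref{thm3.25}. Your $(\Rightarrow)$ direction is fine either way, since $\mathcal{R}_{\mathcal{T}_{max}}$ trivially satisfies the codimension condition from which your chain proceeds.
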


Now applying Theorem \ref{thm3.25} and Theorem 2.2 in \cite{Wang2003} we obtain
\begin{cor} A maximal solvable extension of an nilpotent Lie superalgebra of maximal rank has trivial center and it admits only inner superderivations.
\end{cor}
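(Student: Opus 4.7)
The plan is to combine the explicit structural description from Theorem \ref{thm3.24} with the cited result from \cite{Wang2003}. By Theorem \ref{thm3.25}, the maximal solvable extension $\mathcal{R}$ of a nilpotent Lie superalgebra $\mathcal{N}$ of maximal rank is itself a solvable Lie superalgebra of maximal rank, so the multiplication table of Theorem \ref{thm3.24} is available. In particular, $[\mathcal{Q},\mathcal{Q}]=0$ and each element of $\mathcal{Q}$ acts diagonally on the chosen natural basis of $\mathcal{N}$, with weights $(\alpha_{i,\cdot})$ on $\mathcal{N}_0$ and $(\beta_{i,\cdot})$ on $\mathcal{N}_1$.

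For the triviality of the center, I would take an arbitrary candidate
$$z=\sum_{i=1}^{n} a_i x_i+\sum_{j=1}^{m} b_j y_j+\sum_{l=1}^{k+s} c_l z_l$$
and compute $[z,z_t]$ for every $t$. Using $[z_l,z_t]=0$ the bracket reduces to $\sum_i a_i \alpha_{i,t} x_i+\sum_j b_j \beta_{j,t} y_j$. Since $\mathcal{N}$ is of maximal rank and we are working in a natural basis, every non-generator basis element is a right-normed product of generators, so its weight vector with respect to $\mathcal{Q}$ is a nonzero sum of generator weights; combined with Remark \ref{rem3.21} and the one-dimensionality of primitive root spaces (Lemma 2.5 in \cite{Wang2003}), varying $t$ forces all $a_i$ and $b_j$ to vanish. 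Finally, from $[z,x_i]=c_i x_i=0$ for $1\leq i \leq k$ and $[z,y_j]=c_{k+j} y_j=0$ for $1\leq j \leq s$, all coordinates $c_l$ on the generator-dual directions vanish, and the remaining $c_l$ on non-generator directions vanish by the same weight argument applied to $[z,x_1]$, etc. Hence $Z(\mathcal{R})=0$.

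For the second statement, I would appeal directly to Theorem 2.2 of \cite{Wang2003}, which asserts that any even superderivation of the solvable Lie superalgebra associated to a maximal torus of a nilpotent Lie superalgebra of maximal rank is inner. Since Theorem \ref{thm3.25} identifies $\mathcal{R}$ with such a maximal solvable extension $\mathcal{R}_{\mathcal{T}_{max}}$, the hypothesis of that theorem is satisfied and the conclusion transfers. Together with the triviality of the center (which rules out nonzero odd superderivations that could otherwise fail to be inner, since an odd superderivation maps even centralizers of $\mathcal{Q}$ to themselves), this yields $Der(\mathcal{R})=Inder(\mathcal{R})$.

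The main obstacle I anticipate is the careful bookkeeping in the weight argument: ensuring that for each basis vector in $\mathcal{N}$ we can exhibit a specific $z_t\in\mathcal{Q}$ whose weight separates it from every other basis vector. This reduces to showing that the collection of weights $\{(\alpha_{i,1},\dots,\alpha_{i,k+s})\}_i$ and $\{(\beta_{j,1},\dots,\beta_{j,k+s})\}_j$ consists of pairwise distinct, nonzero tuples, which is precisely what maximal rank together with the natural basis guarantees via the primitive root property invoked in the proof of Lemma \ref{lem3.23}.
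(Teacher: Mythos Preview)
The paper's own proof is a single sentence: it invokes Theorem~\ref{thm3.25} to identify the maximal solvable extension with $\mathcal{R}_{\mathcal{T}_{max}}$, and then Theorem~2.2 of \cite{Wang2003} delivers \emph{both} conclusions at once (that reference proves such superalgebras are complete, i.e.\ have trivial center and $\Der = \mathrm{Inder}$). Your route agrees with the paper on the second half but replaces the citation for the center by a direct weight computation using the table of Theorem~\ref{thm3.24}. That computation is a legitimate alternative and is essentially correct, but it is more work than the paper does and contains a couple of slips worth cleaning up.

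First, in the center argument you only need that each weight vector $(\alpha_{i,1},\dots,\alpha_{i,k+s})$ (resp.\ $(\beta_{j,1},\dots,\beta_{j,k+s})$) is \emph{nonzero}, not that the collection is pairwise distinct; nonvanishing follows immediately from the description of $\alpha_{i,j},\beta_{i,j}$ after Lemma~\ref{lem3.23} (every non-generator basis element involves at least one generator). Second, there are no ``remaining $c_l$ on non-generator directions'': the index $l$ runs exactly over $1,\dots,k+s$, and your brackets with the generators $x_1,\dots,x_k,y_1,\dots,y_s$ already kill all of them. Third, your parenthetical handling of odd superderivations is not right---an odd superderivation swaps parities, so it cannot map an even centralizer to itself. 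Fortunately this patch is unnecessary: Theorem~2.2 of \cite{Wang2003}, as the paper uses it, already covers all superderivations, so once you cite it you are done.
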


\section{Some maximal solvable extensions of nilpotent Lie superalgebras}

In this section we obtain some results on maximal solvable extensions of nilpotent Lie superalgebra $\cal N$  which satisfy the conditions $a)$ and $b)$.

For a fixed basis of $\mathcal{N}$ we consider the system of linear equations
$$S_{x,y}: \quad \left\{\begin{array}{lll}
 \alpha_{i}+\alpha_{j}=\alpha_{t},& \mbox{if} & \gamma_{i,j}^t\neq 0,  \\[1mm]
 \beta_{i}+\beta_{j}=\alpha_{t},& \mbox{if} & \delta_{i,j}^t\neq 0,  \\[1mm]
 \alpha_{i}+\beta_{j}=\beta_{t},& \mbox{if} & \eta_{i,j}^t\neq 0,  \\[1mm]
 \end{array} \right.$$
in the variables $\alpha_1, \dots, \alpha_n$ and $\beta_1, \dots, \beta_m$.

Similar as in the paper \cite{Leger1} we denote by $r\{x_1, \dots, x_n, y_1, \dots, y_m\}$ the rank of the system $S_{x,y}$ and denote by $r\{\mathcal{N}\}=min \  r\{x_1, \dots, x_n, y_1, \dots, y_m\}$ as $\{x_1, \dots, x_n, y_1, \dots, y_m\}$ runs over all bases of $\mathcal{N}$.

In the same way as in \cite{Leger1} for a nilpotent Lie superalgebra $\mathcal{N}$ over an algebraically closed field the equality $\dim \mathcal{T}_{max}=\dim \mathcal{N} -r\{\mathcal{N}\}$ can be proved. Indeed, in order to verify whether a diagonal linear transformation $D=\diag(\alpha_1, \dots, \alpha_n, \beta_1, \dots, \beta_m)$ of an nilpotent Lie superalgebra $\mathcal{N}=\mathcal{N}_0\oplus \mathcal{N}_1$ be an even superderivation we need to apply the Leibniz's rule, which implies the restrictions on diagonal elements $\alpha_{i}, \beta_j$. Namely, from the following chain of equalities
$$\sum\limits_{t=1}^{n}\gamma_{i,j}^t\alpha_tx_t=
D_{z,d}(\sum\limits_{t=1}^{n}\gamma_{i,j}^tx_t)=D_{z,d}([x_i,x_j])=
[D_{z,d}(x_i),x_j]+[x_i,D_{z,d}(x_j)]=(\alpha_i+\alpha_j)\sum\limits_{t=1}^{n}\gamma_{i,j}^tx_t,$$
$$\sum\limits_{t=1}^{n}\delta_{i,j}^t\alpha_tx_t=D_{z,d}(\sum\limits_{t=1}^{n}\delta_{i,j}^tx_t)=
D_{z,d}([y_i,y_j])=[D_{z,d}(y_i),y_j]+[y_i,D_{z,d}(y_j)]=
(\beta_i+\beta_j)\sum\limits_{t=1}^{n}\delta_{i,j}^tx_t,$$

$$\sum\limits_{t=1}^{m}\eta_{i,j}^t\beta_ty_t=
D_{z,d}\left(\sum\limits_{t=1}^{m}\eta_{i,j}^ty_t\right)=D_{z,d}([x_i,y_j])=
[D_{z,d}(x_i),y_j]+[x_i,D_{z,d}(y_j)]=(\alpha_i+\beta_j) \sum\limits_{t=1}^{m}\eta_{i,j}^ty_t$$
we derive that diagonal elements $\alpha_{i}, \beta_j$ have to be a solution of the system $S_{x,y}.$

Clearly, diagonal elements of any even superderivations from $\mathcal{T}_{max}$ are  solutions of the system $S_{x,y}.$

Let $\cal T_{max}=\Span\{d_1, \dots, d_q\}$ with
$d_i=\diag(\alpha_{1,i}, \dots, \alpha_{n,i}, \beta_{1,i}, \dots, \beta_{m,i}), \ i=1, \dots, q.$ Then vectors $d_1, \dots, d_q$ forms a basis of fundamental solutions of the system $S_{x,y}.$

If we consider roots decomposition of $\mathcal{N}$ with respect to $\mathcal{T}_{max}$, then the values of the roots $\alpha_1, \dots, \alpha_n, \beta_1, \dots, \beta_m$ on basis elements of $\mathcal{N}$ will be exactly $\alpha_{-,i}$ and $\beta_{-,i} \ i=1, \dots, q$, that is, $d_i(x_l)=\alpha_{l,i}x_l$ and $d_i(y_t)=\alpha_{t,i}y_t.$

%
%

%

\begin{prop}\label{prop3}
Let $\cal N=\cal N_{0}\oplus\cal N_{1}$ be a nilpotent Lie superalgebra of maximal rank such that $[\mathcal{N}_1,\mathcal{N}_1]\subseteq \mathcal{C}^2(\mathcal{N}_0)$. Then, $\cal N$ is a Lie algebra.
\end{prop}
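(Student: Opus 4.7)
The plan is to argue by contradiction, using the weight decomposition under a maximal torus together with the maximal rank hypothesis. Assume $\mathcal{N}_1 \neq 0$. By Theorem \ref{Gilg} and the nilpotency of $\mathcal{N}$, the chain $\mathcal{C}^q(\mathcal{N}_1)$ terminates at $0$, so $\mathcal{N}_1 \neq [\mathcal{N}_0,\mathcal{N}_1]$ and there must be an odd generator $y_1 \in \mathcal{N}_1 \setminus [\mathcal{N}_0,\mathcal{N}_1]$. Fix a natural basis of $\mathcal{N}$, and by the maximal rank hypothesis a maximal torus $\mathcal{T}_{max}$ whose characters on the generators $\{x_1,\dots,x_k,y_1,\dots,y_s\}$ give linearly independent weights $\alpha_1,\dots,\alpha_k,\beta_1,\dots,\beta_s$. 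As recorded in the discussion just before the proposition, every natural basis element of $\mathcal{N}$ is then a joint weight vector whose weight records precisely which generators appear in its iterated-bracket form, and the $\mathbb{Z}_2$-parity of the element matches the parity of its total $\beta$-multiplicity.

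The core step is the analysis of the weight-$2\beta_1$ subspace of $\mathcal{C}^2(\mathcal{N}_0) = [\mathcal{N}_0,\mathcal{N}_0]$. By hypothesis $[y_1,y_1] \in [\mathcal{N}_1,\mathcal{N}_1] \subseteq [\mathcal{N}_0,\mathcal{N}_0]$, and its weight under $\mathcal{T}_{max}$ is $2\beta_1$. Any non-zero vector of that weight inside $[\mathcal{N}_0,\mathcal{N}_0]$ is a combination of brackets $[u,v]$ of homogeneous even basis elements whose weights sum to $2\beta_1$, with each summand weight carrying an even total $\beta$-multiplicity (since $u,v \in \mathcal{N}_0$). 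Matching $\alpha$-coefficients forces both summand weights to have zero $\alpha$-part, and the $\beta$-matching together with the parity constraint leaves only the options in which one of $\mathrm{wt}(u),\mathrm{wt}(v)$ equals $0$. Since no non-zero natural basis element has weight $0$, one concludes $[y_1,y_1] = 0$. Replacing $2\beta_1$ by $\beta_i+\beta_j$ yields $[y_i,y_j]=0$ for all pairs of odd generators, and induction on right-normed length using the super Leibniz identity extends this to $[\mathcal{N}_1,\mathcal{N}_1] = 0$, since every odd non-generator basis element is a right-normed product involving exactly one odd generator, so every bracket of two odd basis elements reduces to a combination of expressions containing a factor $[y_i,y_j]$ with both factors odd generators.

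To conclude, I would combine $[\mathcal{N}_1,\mathcal{N}_1]=0$ with the maximal rank condition to deduce $\mathcal{N}_1 = [\mathcal{N}_0,\mathcal{N}_1]$, whereupon iteration plus Theorem \ref{Gilg} gives $\mathcal{N}_1 = \mathcal{C}^r(\mathcal{N}_1) = 0$, contradicting the existence of $y_1$ and establishing that $\mathcal{N}$ is a Lie algebra. The hard part will be precisely this last implication: the weight calculation cleanly yields $[\mathcal{N}_1,\mathcal{N}_1]=0$, but the passage from super-abelianness of $\mathcal{N}_1$ to $\mathcal{N}_1 \subseteq [\mathcal{N}_0,\mathcal{N}_1]$ is not automatic from the super-commutator identities alone and appears to require an additional use of the maximal rank count beyond the pure weight argument, most plausibly by producing, from an odd generator $y_1 \notin [\mathcal{N}_0,\mathcal{N}_1]$ with $[y_1,\mathcal{N}_1]=0$, a fresh solution of the system $S_{x,y}$ that would enlarge the solution space beyond the $k+s$ dimensions permitted by maximal rank and thereby contradict the maximality of $\mathcal{T}_{max}$.
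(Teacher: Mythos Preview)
Your core weight calculation for pairs of odd \emph{generators} is correct and close in spirit to the paper's argument, but the proposal has two genuine problems.

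First, you misread the target. The conclusion ``$\mathcal{N}$ is a Lie algebra'' follows already from $[\mathcal{N}_1,\mathcal{N}_1]=0$: once the odd--odd brackets vanish, the super skew-symmetry and super Jacobi identities reduce to ordinary skew-symmetry and Jacobi, so the superbracket is a bona fide Lie bracket on the underlying space. Your final paragraph, which tries to upgrade $[\mathcal{N}_1,\mathcal{N}_1]=0$ to $\mathcal{N}_1=0$, is therefore unnecessary, and it is also false: the abelian superalgebra with $\mathcal{N}_0=\langle x_1\rangle$, $\mathcal{N}_1=\langle y_1\rangle$ and all brackets zero has maximal rank and $[\mathcal{N}_1,\mathcal{N}_1]=0$ but $\mathcal{N}_1\neq 0$. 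The paper's proof stops exactly at $[\mathcal{N}_1,\mathcal{N}_1]=0$.

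Second, your passage from $[y_i,y_j]=0$ for odd generators to $[\mathcal{N}_1,\mathcal{N}_1]=0$ is not justified. The assertion that ``every odd non-generator basis element is a right-normed product involving exactly one odd generator'' is false: for instance $[[[x_1,y_1],y_2],y_3]$ is an odd right-normed word with three odd generators and no inner bracket of two odd generators. Attempting the Leibniz reduction of a general $[y,y']$ produces terms like $[a,[x_j,y']]$ of the same total length, so the induction you sketch does not terminate. The paper circumvents this by running the weight argument once for \emph{arbitrary} odd basis elements $y_{i_0},y_{j_0}$. The extra ingredient you are missing is that the hypothesis $[\mathcal{N}_1,\mathcal{N}_1]\subseteq[\mathcal{N}_0,\mathcal{N}_0]$ forces $\mathcal{N}_0$ itself to be a Lie algebra of maximal rank (its generator space is $\mathcal{N}_0/[\mathcal{N}_0,\mathcal{N}_0]$ of dimension $k$, and $\mathcal{T}_{max}|_{\mathcal{N}_0}$ is $k$-dimensional); the Lie-algebra result of \cite{Khal,Meng} then gives that every weight $\alpha_{t_0}$ on $\mathcal{N}_0$ is a nonnegative integer combination of $\alpha_1,\dots,\alpha_k$ alone. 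A nonzero $[y_{i_0},y_{j_0}]$ with leading term $x_{t_0}$ would yield $\beta_{i_0}+\beta_{j_0}=\alpha_{t_0}$, where the left side has nonzero $\beta$-part (each odd weight carries an odd number of $\beta$-summands) and the right side has none---a nontrivial relation among the free parameters $\alpha_1,\dots,\alpha_k,\beta_1,\dots,\beta_s$, contradicting $\dim\mathcal{T}_{max}=k+s$.
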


\begin{proof} Let $\{x_1, \dots, x_k\}$ and $\{y_1, \dots, y_s\}$ are even and odd generator basis elements of $\mathcal{N}$, respectively. From conditions of proposition we have
$$\dim \mathcal{T}_{max}=
\dim (\mathcal{N}_0/{\cal C}^{2}(\mathcal{N}_0)) + \dim (\mathcal{N}_1/{\cal C}^{2}(\mathcal{N}_1)).$$

A general element $d$ of $\mathcal{T}_{max}$ has the following diagonal form:\
$$d=\diag(\alpha_1, \dots, \alpha_k, \alpha_{k+1}, \dots, \alpha_n, \beta_1, \dots, \beta_s, \beta_{s+1}, \dots, \beta_m),$$
where $\alpha_i, \ 1\leq i \leq k$, $\beta_j, \ 1 \leq j \leq s$ are free parameters and the rest of the parameters are linearly dependent on these parameters in the system $S_{x,y}.$

Clearly, $\dim {\mathcal{T}_{max}}_{|\cal N_0}=k.$ Consider now a solvable Lie algebra ${\cal R_{\mathcal{T}_{max}}}_{|\mathcal{N}_0}=\cal N_0\dot{+}{\mathcal{T}_{max}}_{| \mathcal{N}_0}.$ Then due to results of \cite{Khal} and \cite{Meng} we have the existence of a basis $\{z_1,\dots,z_k\}$ of ${\mathcal{T}_{max}}_{|\cal N_0}$ such that
\begin{align*}
[x_i,z_i]&=x_i, \quad 1\leq i\leq k,\\
[x_i,z_j]&=\alpha_{ij}x_j, \quad k+1\leq i\leq n, \ 1\leq j\leq k,
\end{align*}
where $\alpha_{ij}$ is the number of entries of a generators basis element $x_j$ involved in forming of non-generator basis element $x_i$. This implies that for any $l\in \{1, \dots, n\}$ there exists $\bar{d}\in \mathcal{T}_{max}$ such that $\bar{d}(x_l)=\alpha_lx_l\neq 0.$

Let us assume that $[\mathcal{N}_1,\mathcal{N}_1]\neq 0$. Then there exist $i_0, \ j_0$ and $t_0$ such that $[y_{i_0}, y_{j_0}]=\sum\limits_{l=t_0}^{n}\gamma_l x_l$ with $\gamma_{t_0}\neq 0$. For $d\in \mathcal{T}_{max}$ with $d(x_{t_0})=\alpha_{t_0}x_{t_0}\neq 0$ we have
$$d([y_{i_0}, y_{j_0}])=[d(y_{i_0}), y_{j_0}]+[y_{i_0}, d(y_{j_0})]=(\beta_{i_0}+\beta_{j_0})[y_{i_0}, y_{j_0}]=
(\beta_{i_0}+\beta_{j_0})\left(\sum\limits_{l=t_0}^{n}\gamma_l x_l\right).$$
On the other hand, we have
$$d([y_{i_0}, y_{j_0}])=d\left(\sum\limits_{l=t_0}^{n}\gamma_l x_l\right)=\sum\limits_{l=t_0}^{n}\alpha_l\gamma_lx_l.$$

Comparing coefficient at the basis element $x_{t_0}$, we get $\beta_{i_0}+\beta_{j_0}=\alpha_{t_0},$ which implies a relation between free parameters $\alpha_1, \dots, \alpha_k$ and $\beta_1, \dots, \beta_s.$ Consequently,
$\dim \mathcal{T}_{max}<k+s$, this is a contradiction with maximality of rank of $\mathcal{N}$. Therefore, we obtain $[\mathcal{N}_1, \mathcal{N}_1]=0,$ which complete the proof of proposition.
\end{proof}


%

Now we present an example which shows that maximal solvable extension of an nilpotent Lie superalgebra does not guarantee the maximality of solvable extension of its even part.

\begin{exam} \label{exam3.17} Let us consider maximal solvable extension of the nilpotent Lie superalgebra $\cal N^2$ in Example \ref{exam111}. Then it is isomorphic to the solvable Lie superalgebra with the table of multiplications:
$$\left\{\begin{array}{lll}
[\cal N^2, \cal N^2], &\\[1mm]
[x_1,z_1]=x_1,&[x_2,z_2]=2x_2,\\[1mm]
[y_1,z_1]=-y_1,&[y_1,z_2]=y_1,\\[1mm]
[y_3,z_1]=y_3,&[y_2,z_2]=y_2,\\[1mm]
[y_3,z_2]=y_3.
\end{array}\right.$$

Clearly, $\cal N^2_0\oplus \cal Q_0^1$ is three-dimensional solvable Lie algebra with two-dimensional abelian nilradical. However, it is well-known that maximal solvable extension of two-dimensional abelian algebra is four-dimensional.
\end{exam}




%


Consider the roots decomposition of odd part of the nilradical $\mathcal{N}$ with respect to $\mathcal{T}_{max}.$ Namely,
$$\begin{array}{ll}
\mathcal{N}_1=\mathcal{N}_{\beta_1}\oplus\dots \oplus \mathcal{N}_{\beta_s}\oplus\mathcal{N}_{\beta_{s+1}}\dots
\oplus \mathcal{N}_{\beta_v}.\\[1mm]
\end{array}$$


\begin{thm} \label{thm3.30}
Let $\cal R=\cal R_0\oplus \cal R_1$ be a complex maximal solvable extension of $\cal N=\cal N_0 \oplus \cal N_1$ which satisfies the following conditions:

i) \quad  $\cal R_0$ is solvable Lie algebra of maximal rank;

ii) \quad $\beta_i\neq \beta_j$ for any $i\in\{1 \dots, s\}$ and $j\in\{1, \dots, v\}$.

 Then $\cal R\cong \mathcal{R}_{\mathcal{T}_{max}}.$
\end{thm}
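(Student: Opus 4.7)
The plan is to run an argument parallel to the proof of Theorem \ref{thm3.24}, with conditions $i)$ and $ii)$ supplying the structural inputs that the maximal-rank hypothesis on $\cal N$ would otherwise furnish. By Proposition \ref{prop3.12}, $\cal R=\cal N\oplus\cal Q_0$ with $\cal Q_0\subseteq\cal R_0$. Condition $i)$ asserts that $\cal R_0$ is a solvable Lie algebra of maximal rank whose nilradical is $\cal N_0$; invoking Theorem \ref{maxrankalgebra} we obtain a basis $\{z_1,\dots,z_p\}$ of $\cal Q_0$ and a basis of $\cal N_0$ in which every $\operatorname{ad}_{z_j}|_{\cal N_0}$ is diagonal, with the weights of $\cal T_{max}$ restricted to $\cal N_0$ on its diagonal, and $[z_i,z_j]\equiv 0\pmod{\cal N}$; in particular $\cal Q_0^2=\{0\}$ in the decomposition preceding Theorem \ref{thm3.17}.

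Next we extend this diagonal action to $\cal N_1$. Because $b)$ (a standing assumption of this section) permits the superalgebra analogue of Lie's theorem, every $\operatorname{ad}_{z_j}|_{\cal N_1}$ can be simultaneously upper-triangularised, with diagonal entries running through the weights of $\cal T_{max}$ restricted to $\cal N_1$. The opening normalisations of Lemma \ref{lem3.22} produce, modulo $\cal N^2$, the diagonal identities $[y_i,z_j]\equiv\delta_{i,j-k}y_i$ on the odd generators and $[z_i,z_j]\equiv 0$. To push the induction of Lemma \ref{lem3.23} through the subsequent quotients $\cal N^t/\cal N^{t+1}$ without the maximal-rank hypothesis on $\cal N$, we use condition $ii)$: the assumption $\beta_i\neq\beta_j$ for $i\in\{1,\dots,s\}$ and $j\neq i$ forces each generator weight space $\cal N_{\beta_i}$ to be the one-dimensional line $\mathbb{C}y_i$, which is exactly the role that Lemma~2.5 of \cite{Wang2003} played in the original argument. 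With this substitution the inductive argument of Lemma \ref{lem3.23} yields the structural identities of Theorem \ref{thm3.24}:
\[ [x_i,z_j]=\alpha_{i,j}x_i,\qquad [y_i,z_j]=\beta_{i,j}y_i,\qquad [z_i,z_j]=0. \]

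Hence the operators $\operatorname{ad}_{z_j}|_\cal N$ form a commuting family of diagonal even superderivations of $\cal N$ and span a torus of dimension $p$; maximality of $\cal R$ combined with condition $i)$ forces this torus to be maximal, so $p=\dim\cal T_{max}$. The map $\varphi\colon\cal R\to\cal N\dot{+}\cal T_{max}=\cal R_{\cal T_{max}}$ that is the identity on $\cal N$ and sends $z_j\mapsto\operatorname{ad}_{z_j}|_\cal N$ is then the desired isomorphism.

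The principal obstacle is verifying that condition $ii)$, which guarantees one-dimensional weight spaces only for the odd generators, is genuinely sufficient at each point where the proof of Lemma \ref{lem3.23} relied on the maximal-rank assumption (condition $i)$ supplies the analogous property on the even side). Once this substitution is justified at every occurrence, the remaining manipulations, namely successive changes of generators to absorb off-diagonal contributions together with the cleanup of the brackets $[z_i,z_j]$, are a direct transcription of the bookkeeping already carried out for Lemma \ref{lem3.23} and Theorem \ref{thm3.24}.
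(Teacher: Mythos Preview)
Your overall architecture matches the paper's: use condition $i)$ and Theorem \ref{maxrankalgebra} to diagonalise the action of $\cal Q_0$ on $\cal N_0$, triangularise on $\cal N_1$ via Lie's theorem, then kill the off-diagonal terms using condition $ii)$. However, one step is misformulated and another nice point is missed.

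The misformulated step is your invocation of Lemma \ref{lem3.22} to obtain $[y_i,z_j]\equiv\delta_{i,j-k}y_i$. In the setting of Theorem \ref{thm3.30} only $\cal R_0$, not $\cal N$, has maximal rank; consequently $\dim\cal Q_0=k=\dim(\cal N_0/\cal N_0^2)$ and there are no elements $z_{k+1},\dots,z_{k+s}$ at all. The index $j-k$ is never positive, so the Kronecker-delta formula is vacuous, and more importantly the mechanism of Lemma \ref{lem3.22} part $ii)$ --- which used the special element $z_{k+i}$ satisfying $[y_i,z_{k+i}]\equiv y_i$ to successively absorb the $\mu$-terms --- is unavailable here. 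The paper does \emph{not} quote Lemma \ref{lem3.22} for the odd side. It writes $[y_i,z_j]=\beta_{i,j}y_i+\sum_{p>i}\mu_{i,j}^p y_p$ with general diagonal entries $\beta_{i,j}$, expands $[y_i,[z_j,z_t]]=0$ to get $(\beta_{i,t}-\beta_{i+1,t})\mu_{i,j}^{i+1}=(\beta_{i,j}-\beta_{i+1,j})\mu_{i,t}^{i+1}$, and then uses condition $ii)$ to choose $t$ with $\beta_{i,t}\neq\beta_{i+1,t}$, permitting the basis change $y_i'=y_i+\frac{\mu_{i,j}^{i+1}}{\beta_{i,j}-\beta_{i+1,j}}y_{i+1}$. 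This is the place where $ii)$ actually does its work, and it is a different computation from the one in Lemma \ref{lem3.22}. Your displayed end formula $[y_i,z_j]=\beta_{i,j}y_i$ is correct, but the route you describe to it is not.

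A smaller point: you treat condition $b)$ as a standing assumption, whereas the paper \emph{derives} it from condition $i)$ via $[\cal R_1,\cal R_1]=[\cal N_1,\cal N_1]\subseteq\cal N_0=[\cal N_0,\widehat{\cal T}_{max}]\subseteq[\cal R_0,\cal R_0]$, the middle equality coming from \cite{Meng}. This shows that in Theorem \ref{thm3.30} condition $b)$ is automatic, which is worth noting.
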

\begin{proof} Due to the condition $i)$ we have that $\mathcal{R}_0=R_{\widehat{\mathcal{T}}_{max}}$, where $\widehat{\mathcal{T}}_{max}$ is a maximal torus of $\mathcal{N}_0$. Applying the main result of \cite{Khal} we have the existence of a basis $\{z_1,\dots,z_{k}\}$ (here $k=\dim \widehat{\mathcal{T}}_{max}$) such that
\begin{align}\label{qn}
[x_i,z_i]=x_{i}, \quad [x_j,z_{i}]=\alpha_{j,i}x_{j}, \quad i=1,\dots,k, \ j=1,\dots, n
\end{align}
where $\alpha_{j,i}$ forms a fundamental solution of the subsystem $S_x$ of the system $S_{x,y}$.

Taking into account results of the work \cite{Meng} we get the equality
$[\mathcal{N}_0, \widehat{\mathcal{T}}_{max}]=\mathcal{N}_0$, which implies
$$[\mathcal{R}_1,\mathcal{R}_1]=[\mathcal{N}_1, \mathcal{N}_1] \subseteq \mathcal{N}_0=[\mathcal{N}_0, \widehat{\mathcal{T}}_{max}]\subseteq [\mathcal{R}_0, \mathcal{R}_0].$$
Thus, due to these embeddings we conclude that analogue of Lie's theorem for the solvable Lie superalgebra  $\mathcal{R}$ is true. So, we only need to describe the product $[\cal N_1,\widehat{\mathcal{T}}_{max}]$ for which is sufficient to describe it on generator elements of $\cal N_1.$

Thanks to analogue of Lie's theorem one can introduce notations
$$[y_i,z_j]=\beta_{i,j}y_i+\sum_{p=i+1}^{m}\mu_{i,j}^{p}y_p, \quad i=1, \dots, s, \quad j=1, \dots, k.$$

Applying equality $[z_j,z_t]=0$ in Leibniz superidentity for the triple of elements $\{y_i, z_j, z_t\}$ with $1\leq i \leq s$ and
$1\leq j, t \leq k,$ we derive
$$0=\Big(\beta_{i,t}\mu_{i,j}^{i+1}+\beta_{i+1,j}\mu_{i,t}^{i+1}
-\beta_{i,j}\mu_{i,t}^{i+1}-\beta_{i+1,t}\mu_{i,j}^{i+1}\Big)y_{i+1}+
\sum_{p=i+2}^{m}(*)y_{p}.$$

Consequently, we obtain
\begin{equation}{\label{ijk}}
(\beta_{i,t}-\beta_{i+1,t})\mu_{i,j}^{i+1}=(\beta_{i,j}-\beta_{i+1,j})\mu_{i,t}^{i+1}.
\end{equation}
Due to condition $ii)$ we conclude that for any $r, l\ (1\leq r\neq l \leq s)$ there exists $t\in\{1,\dots,k\}$ such that $\beta_{r,t}\neq\beta_{l,t}.$ We choose $t\in\{1,\dots,k\}$ such that $\beta_{i,t}\neq\beta_{i+1,t}.$

Let us assume that $\beta_{i,j}\neq\beta_{i+1,j}$ (otherwise we get $\mu_{i,j}^{i+1}=0$). Then Equality  \eqref{ijk} implies
$\mu_{i,j}^{i+1}=\frac{\beta_{i,j}-\beta_{i+1,j}}{\beta_{i,t}-\beta_{i+1,t}}\mu_{i,t}^{i+1}$.

Taking a basis change as follows: $y_{i}^{\prime}=y_i+\frac{\mu_{i,j}^{i+1}}{\beta_{i,j}-\beta_{i+1,j}}y_{i+1},$ one can assume
$$[y_i,z_j]=\beta_{i,j}y_i+\sum\limits_{p=i+2}^{m}\mu_{i,j}^{p}y_p, \quad i=1, \dots, s, \quad  j=1,\dots,k.$$

Sequentially applying the same arguments $s$-times, finally we get
$$[y_i,z_j]\equiv\beta_{i,j}y_i, \quad i=1, \dots, s, \quad j=1, \dots, k \quad  (\mbox{mod} \ \mathcal{N}^2).$$

Now applying the condition $ii)$ and arguments similar as in the proof of Lemma \ref{lem3.23} we deduce
$$[y_i,z_j]\equiv\beta_{i,j}y_i \quad i=1, \dots, s, \ j=1, \dots, q \quad  (\mbox{mod} \ \mathcal{N}^{t+1}).$$

Continuing this process until the nilindex of $\cal N$, finally, we obtain
\begin{equation}\label{eq3.15}
[y_i,z_j]=\beta_{i,j}y_i, \quad i=1, \dots, s, \ j=1, \dots, q.
\end{equation}

Thus, the products (\ref{qn}) and (\ref{eq3.15}) provide the existence of an isomorphism $\varphi$ such that $\varphi(\cal R)=\cal R_{\mathcal{T}_{max}}.$
\end{proof}

The following solvable Lie superalgebra with the non-zero products
$$[y_1,y_2]=x_1, \quad [x_1,z_1]=x_1, \quad [y_1,z_1]=y_1, \quad
[x_1,z_2]=x_1, \quad  [y_2,z_2]=y_2,$$
satisfy the conditions $i)-ii)$ of Theorem \ref{thm3.30}.

\end{document}